\DeclareMathAlphabet{\pazocal}{OMS}{zplm}{m}{n}
\newtheorem{theorem}{Theorem}[section]
\newtheorem{lemma}[theorem]{Lemma}
\newtheorem{proposition}[theorem]{Proposition}
\newtheorem{corollary}[theorem]{Corollary}
\theoremstyle{definition}
\theoremstyle{remark}
\newtheorem{remark}[theorem]{Remark}
\numberwithin{equation}{section}
\newcommand{\R}{\mathbb{R}}
\newcommand{\Q}{\mathbb{Q}}
\newcommand{\Z}{\mathbb{Z}}
\newcommand{\bbm}{\begin{bmatrix}}
\newcommand{\ebm}{\end{bmatrix}}
 \newcommand{\set}[1]{\left\{#1\right\}}
\newcommand{\N}{\ensuremath{\mathbb{N}}}
\newcommand{\f}{\infty}
\begin{document}
\title[rational points in Cantor sets and spectral eigenvalue problem]{Rational points in Cantor sets and spectral eigenvalue problem for self-similar spectral measures}

\author{Derong Kong}
\address[D. Kong]{College of Mathematics and Statistics, Center of Mathematics, Chongqing University, Chongqing, 401331, P.R.China.}
\email{derongkong@126.com}

\author{Kun Li}
\address[K. Li]{College of Mathematics and Statistics, Chongqing University, Chongqing, 401331, P.R.China}
\email{kunli@cqu.edu.cn}

\author{Zhiqiang Wang}
\address[Z. Wang]{College of Mathematics and Statistics, Center of Mathematics, Key Laboratory of Nonlinear Analysis and its Applications (Ministry of Education), Chongqing University, Chongqing 401331, P.R.China}
\email{zhiqiangwzy@163.com}

%\date{\today}

\subjclass[2020]{Primary: 28A80; Secondary: 11A63, 42C05}

\begin{abstract}
Given $q\in \mathbb{N}_{\ge 3}$ and a finite set $A\subset\mathbb{Q}$, let
$$K(q,A)= \bigg\{\sum_{i=1}^{\infty} \frac{a_i}{q^{i}}:a_i \in A ~\forall i\in \mathbb{N} \bigg\}.$$
For $p\in\mathbb{N}_{\ge 2}$ let $D_p\subset\mathbb{R}$ be the set of  all rational numbers having a finite $p$-ary expansion. We show in this paper that for $p \in \mathbb{N}_{\ge 2}$ with $\gcd(p,q)=1$, the intersection $D_p\cap K(q, A)$ is a finite set if and only if $\dim_H K(q, A)<1$, which is also equivalent to the fact that the set $K(q, A)$ has no interiors.
We apply this result to study the spectral eigenvalue problem.
For a Borel probability measure $\mu$ on $\mathbb{R}$, a real number $t\in \mathbb{R}$ is called a spectral eigenvalue of $\mu$ if both $E(\Lambda) =\big\{ e^{2 \pi \mathrm{i} \lambda x}: \lambda \in \Lambda \big\}$ and $E(t\Lambda) = \big\{ e^{2 \pi \mathrm{i} t\lambda x}: \lambda \in \Lambda \big\}$ are orthonormal bases in $L^2(\mu)$ for some $\Lambda \subset \mathbb{R}$.
For any self-similar spectral measure generated by a Hadamard triple, we provide a class of spectral eigenvalues which is dense in $[0,+\infty)$, and show that every eigen-subspace associated with these spectral eigenvalues is infinite.
\end{abstract}

\keywords{Cantor set, rational number, spectral measure, spectral eigenvalue problem}
\maketitle

\section{Introduction}

\subsection{Rational points in Cantor sets}

Let $\N:=\set{1,2,3,\ldots}$ be the set of all positive integers, and let $\N_0:=\N\cup\set{0}$.  For $k\in\N$ let $\N_{\ge k}:=\N\cap[k,+\f)$.  For $n,m \in\Z$ let $\mathrm{gcd}(n, m)$ be their \emph{greatest common divisor}.
Let $\#A$ denote the cardinality of a set $A$ and let $\dim_H K$ be the Hausdorff dimension of a set $K$.

Given $q\in \N_{\ge 2}$ and a finite set $A \subset \mathbb{Q}$, let $K(q, A)$ be the self-similar set in $\mathbb R$ generated by the \emph{iterated function system} (IFS) (cf.~\cite{Hutchinson-1981})
\begin{equation}\label{eq:IFS-q-A}
{\mathcal F}_{q,A}=\bigg\{ f_a(x)=\frac{x+a}{q}: a\in A \bigg\}.\end{equation}
 Then the set $K(q, A)$ can be written algebraically as
\begin{equation}\label{eq:self-similar-set-K}
  K(q,A)=\bigg\{\sum_{i=1}^{\infty} \frac{a_i}{q^{i}}:a_i \in A ~~\forall i\in \N\bigg\}.
\end{equation}
To avoid the trivial situation, we always assume $\# A \ge 2$.
When $q=2$, the set $K(q,A)$ is always an interval. In the following we focus on the case $q \in \N_{\ge 3}$.

For $p\in \N_{\ge 2}$, let $D_p$ be the set of all rational numbers in $\mathbb {R}$ having a finite $p$-ary expansion. Then
\begin{equation}\label{Dp-definition}
D_p=\bigcup_{n=1}^\f \frac{\mathbb {Z}}{p^n},
\end{equation}
which is a proper subset of $\mathbb Q$ and  is dense in $\mathbb R$.

In 1984, Mahler \cite{Mahler-1984} raised the problem of how well irrational elements of the middle-third Cantor set can be approximated by rational numbers within it, and rational numbers outside of it.
A related question is the structure of rational points in Cantor sets \cite{Wall-1990,Nagy-2001,Bloshchitsyn-2015,Schleischitz-2021,Shparlinski-2021,Li-Li-Wu-2023,JKLW-2024}.
When $A \subset \{0,1,\ldots ,q-1 \}$ and $\#A<q$, if $\gcd(p,q)=1$, Schleischitz \cite[Corollary 4.4]{Schleischitz-2021} showed $\#\big( D_{p}\cap K(q,A) \big)<+\f$, and furthermore, Jiang et al.~\cite[Theorem 1.2]{JKLW-2024} proved $\#\big( (D_p+\alpha)\cap K(q, A) \big)< +\f$ for any $\alpha\in\R$.
In this paper we consider general digit sets $A\subset\mathbb Q$, and give necessary and sufficient conditions for $\#\big( D_p\cap K(q,A) \big)<+\f$.

\begin{theorem}\label{thm:equivalence-of-finite}
Let $q\in \mathbb{N}_{\ge 3}$ and let $A \subset \mathbb{Q}$ be a finite set. The  following statements are equivalent.
\begin{itemize}
  \item[{\rm(i)}] $\dim_{H} K(q,A)<1$;
  \item[{\rm(ii)}] the set $K(q,A)$ has no interiors;
  \item[{\rm(iii)}] $\#\big(D_p\cap K(q,A)\big)<+\f$ for any $p\in\mathbb N_{\ge 2}$ with $\gcd(p,q)=1$.
\end{itemize}
\end{theorem}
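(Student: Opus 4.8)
The plan is to prove the cyclic chain of implications $\mathrm{(i)}\Rightarrow\mathrm{(iii)}\Rightarrow\mathrm{(ii)}\Rightarrow\mathrm{(i)}$. I would begin with a normalisation: if $A=M^{-1}A_0$ with $M\in\N$ and $A_0\subset\Z$, then $K(q,A)=M^{-1}K(q,A_0)$, so $\dim_H$ and the property of having empty interior are unchanged, while $MD_p\subseteq D_p$ gives $D_p\cap K(q,A)\subseteq M^{-1}\big(D_p\cap K(q,A_0)\big)$; hence it suffices to treat $A\subset\Z$. I would record two structural facts for later use: writing $A_n=\big\{\sum_{i=1}^n a_iq^{n-i}:a_i\in A\big\}$, one has the self-similar relation $q^nK(q,A)=K(q,A)+A_n$ (Minkowski sum); and, since Hausdorff and upper box dimensions coincide for self-similar sets, the hypothesis $\dim_H K(q,A)<1$ forces $\overline{\dim}_{B}K(q,A)<1$, whence the covering number satisfies $N\big(K(q,A),\delta\big)\le C\delta^{-\beta}$ for some $\beta<1$ as $\delta\to 0$.

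The implication $\mathrm{(iii)}\Rightarrow\mathrm{(ii)}$ I would obtain by contraposition: if $K(q,A)$ had nonempty interior it would contain an interval, and since $D_p$ is dense in $\R$ the set $D_p\cap K(q,A)$ would then be infinite for every $p$, in particular for some $p$ coprime to $q$, contradicting $\mathrm{(iii)}$. For $\mathrm{(ii)}\Rightarrow\mathrm{(i)}$, i.e.\ that $\dim_H K(q,A)=1$ implies $K(q,A)$ has nonempty interior, I would appeal to the theory of self-similar sets with integer data: using $q^nK(q,A)=K(q,A)+A_n$ one checks that full dimension forces $\#A_n$ to grow like $q^n$, which in turn forces $\mathcal{L}\big(K(q,A)\big)>0$, and a self-similar subset of $\R$ of positive Lebesgue measure has nonempty interior. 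I expect this to be the main obstacle of the proof: it is precisely the step where no soft argument is available, and one needs genuine structural input about self-similar sets generated by rational (here integer) parameters.

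The heart of the matter is $\mathrm{(i)}\Rightarrow\mathrm{(iii)}$, which I would prove directly. Fix $p$ with $\gcd(p,q)=1$ and, for $j\ge 0$, set $S_j=\{\,x\in K(q,A):\text{the reduced denominator of }x\text{ equals }p^j\,\}$, so that $D_p\cap K(q,A)=\bigsqcup_{j\ge 0}S_j$; each $S_j\subset p^{-j}\Z$ is finite because $K(q,A)$ is bounded. The decisive observation is a rigidity of $S_j$ under the coding shift: if $x\in S_j$ and $(a_i)_{i\ge 1}\in A^{\N}$ is any coding of $x$, then every tail $r_N:=\sum_{i\ge 1}a_{N+i}q^{-i}=q^Nx-\sum_{i=1}^N a_iq^{N-i}$ lies in $K(q,A)$, and since multiplying a fraction whose denominator is coprime to $q$ by $q$ and subtracting an integer leaves its reduced denominator unchanged, each $r_N$ again has reduced denominator $p^j$, i.e.\ $r_N\in S_j$ for all $N$. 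Thus the coding orbit of $x$ stays in the finite set $S_j$; in the finite digraph on $S_j$ with an edge $s\to qs-a$ whenever $a\in A$ and $qs-a\in K(q,A)$, every vertex has out-degree at least one, so the walk from $x$ eventually traverses a simple directed cycle $C$. Following $C$ around gives a purely periodic coding, of period $\delta:=|C|$, of one of its vertices $c\in S_j$; hence $c=t/(q^{\delta}-1)$ for some integer $t$, and since $c$ has reduced denominator $p^j$ this forces $p^j\mid q^{\delta}-1$, so $\delta\ge\operatorname{ord}_{p^j}(q)$. As the $\delta$ vertices of $C$ are distinct and lie in $S_j$, we conclude that $S_j\neq\emptyset$ forces $\#S_j\ge\operatorname{ord}_{p^j}(q)$.

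To conclude, I would play this lower bound against an upper one. The elements of $p^{-j}\Z$ are $p^{-j}$-separated, so $\#S_j\le\#\big(K(q,A)\cap p^{-j}\Z\big)\le C\,p^{\beta j}$ with $\beta<1$, by the box-dimension bound of the first paragraph. On the other hand $\operatorname{ord}_{p^j}(q)\ge c_0\,p^j$ for all $j\ge 1$ with a constant $c_0=c_0(p,q)>0$ (the multiplicative order of $q$ in $(\Z/p^j\Z)^\times$ grows proportionally to $p^j$). Hence $S_j\neq\emptyset$ would give $c_0 p^j\le C p^{\beta j}$, impossible once $j$ is large; therefore $S_j=\emptyset$ for all $j$ beyond some $j_0$, and $D_p\cap K(q,A)=\bigsqcup_{0\le j\le j_0}S_j$ is finite, which is $\mathrm{(iii)}$. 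Control of the \emph{upper} box dimension is essential here: with only $\dim_H K(q,A)<1$ one would obtain $S_j=\emptyset$ merely along a subsequence of $j$, which does not suffice.
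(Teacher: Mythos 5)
Your implication (i) $\Rightarrow$ (iii) is in essence the paper's own argument: reduce to integer digits, use the fact that the shift along a coding preserves the reduced denominator when it is coprime to $q$, and play an upper bound coming from dimension against a lower bound coming from the multiplicative order of $q$; your use of $\dim_H=\overline{\dim}_B$ for self-similar sets in place of the paper's Ahlfors-regularity input (via the weak separation property) is a legitimate small simplification, and for \emph{prime} $p$ your cycle argument is correct. The genuine gap is the composite case: the decomposition $D_p\cap K(q,A)=\bigsqcup_{j\ge0}S_j$, with $S_j$ the points of reduced denominator exactly $p^j$, is false unless $p$ is prime --- e.g.\ $1/2\in D_6$ has reduced denominator $2$, not a power of $6$ --- and statement (iii) quantifies over all $p\in\N_{\ge2}$ with $\gcd(p,q)=1$, with no reduction of composite $p$ to its prime divisors (a point $d/(p_1^{n_1}p_2^{n_2})$ with $n_1\ne n_2$ lies in $D_{p_1p_2}$ but in no single $D_{p_i}$). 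You must stratify by arbitrary reduced denominators $u$ whose prime factors divide $p$, and you then need $\mathrm{ord}_u(q)\ge c_0\,u$ uniformly over all such $u=p_1^{n_1}\cdots p_k^{n_k}$; this is exactly the paper's Lemma \ref{lem:order-lower-bound}, obtained by combining the prime-power estimates of Bloshchitsyn across the distinct primes via a divisibility/lcm argument. Your parenthetical order claim treats only a single modulus $p^j$ and is asserted rather than proved. With that repair, the rest of your argument (tails keep the reduced denominator; a simple cycle of length $\delta$ gives $u\mid q^{\delta}-1$, hence $\#S_u\ge\mathrm{ord}_u(q)$; compare with $\#\big(K(q,A)\cap u^{-1}\Z\big)\le Cu^{\beta}$, $\beta<1$) goes through and matches the paper's mechanism.

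The step (ii) $\Rightarrow$ (i) is also not complete as written. Its final link, ``a self-similar subset of $\R$ of positive Lebesgue measure has nonempty interior,'' is not a citable theorem for general self-similar sets: whether positive Lebesgue measure forces nonempty interior is a well-known open question in that generality, so no soft appeal is available. What makes it true here is that the IFS $\mathcal{F}_{q,A}$ with rational digits satisfies the weak separation property (the paper's Lemma \ref{IFS-is-WSP}), after which Zerner's theorem gives precisely the implication (ii) $\Rightarrow$ (i); this is the paper's route, and it also renders your intermediate reduction through the growth of $\#A_n$ and positivity of Lebesgue measure unnecessary. Your first two links (full dimension forces $\#A_n\ge q^n$ by subadditivity once $\dim_H K(q,A)$ is identified with the exponential growth rate of $\#A_n$, and then $\mathcal{L}\big(K(q,A)\big)>0$ by a bounded-overlap count, using integrality of the digits) are correct but only sketched; the missing last link is exactly where the WSP input must enter.
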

\begin{remark}\label{remark:intersection}
\begin{enumerate}[{\rm(i)}]
  \item The statement (iii) can be strengthened as $\#\big((rD_p+\alpha)\cap K(q,A)\big)<+\f$ for $r \in \Q \setminus \{0\}$, $\alpha \in \Q$, and $p\in\mathbb N_{\ge 2}$ with $\gcd(p,q)=1$. This is because $\#\big((rD_p+\alpha)\cap K(q,A)\big) = \# \big( D_p \cap K(q,\widetilde{A}) \big)$ where $\widetilde{A}=r^{-1}A - r^{-1}\alpha(q-1)$.

  \item Note that the IFS ${\mathcal F}_{q, A}$ defined in (\ref{eq:IFS-q-A}) satisfies the weak separation property (see Lemma \ref{IFS-is-WSP}). It follows from \cite[Theorem 3]{Zerner1996} that (ii) implies (i). Since $D_p$ is dense in $\R$, we obviously have (iii) implies (ii). It remains to prove that (i) implies (iii).

  \item When $\#A =q$, the set $K(q,A)$ is called a \emph{self-similar tile} if it has positive Lebesgue measure, see \cite[Theorem 1.1]{Lagarias-Wang-1996} for conditions characterizing self-similar tiles. For $q \in \N_{\ge 3}$ and $A\subset \Q$ with $\#A=q$, it follows from Theorem \ref{thm:equivalence-of-finite} that $K(q,A)$ is a self-similar tile if and only if there exists $p \in \N_{\ge 2}$ with $\gcd(p,q)=1$ such that $\#\big( D_p \cap K(q,A) \big) = +\f$.
\end{enumerate}
\end{remark}

For $q \in \N_{\ge 3}$ and $A =\{0,1,\ldots,q-2,q\}$, noting that $K(q,A) = K(q^2, qA +A)$ and $\#(qA+A) < q^2$, we have $\dim_H K(q,A) < 1$. 
By Theorem \ref{thm:equivalence-of-finite} we obtain the following corollary directly.

\begin{corollary}
  For $p \in \N_{\ge 2}$ and $q \in \N_{\ge 3}$ with $\gcd(p,q)=1$, we have $$\#\big( D_p \cap K(q,\{0,1,\ldots, q-2, q\}) \big) < + \f.$$
\end{corollary}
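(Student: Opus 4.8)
The plan is to deduce this from Theorem~\ref{thm:equivalence-of-finite} applied not to $(q,A)$ directly but to a second-level rescaling. Fix $q\in\N_{\ge 3}$, set $A=\{0,1,\ldots,q-2,q\}$, and put $B:=qA+A=\{qa+a':a,a'\in A\}\subset\Z$. The first step is the elementary regrouping identity $K(q,A)=K(q^2,B)$: starting from the algebraic description \eqref{eq:self-similar-set-K}, pair consecutive digits via $\sum_{i=1}^{\f}a_i q^{-i}=\sum_{j=1}^{\f}(q a_{2j-1}+a_{2j})q^{-2j}$, and observe that $(b_j)_{j\ge 1}$ ranges over all sequences in $B^{\N}$ exactly as $(a_i)_{i\ge 1}$ ranges over $A^{\N}$. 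Since $B$ is a finite subset of $\Q$ with $\#B\ge 2$ and $q^2\in\N_{\ge 3}$, the pair $(q^2,B)$ is admissible for Theorem~\ref{thm:equivalence-of-finite}.

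The second step is the numerical observation $\#B<q^2$. Trivially $\#B\le(\#A)^2=q^2$, but equality fails because of a digit coincidence at the second level: $q\cdot 0+q=q=q\cdot 1+0$ exhibits two distinct pairs $(0,q),(1,0)\in A\times A$ with the same image, so $\#B\le q^2-1$. Hence the IFS $\mathcal{F}_{q^2,B}$ consists of $\#B\le q^2-1$ contracting similarities of common ratio $q^{-2}$, and its similarity dimension is $\frac{\log\#B}{2\log q}\le\frac{\log(q^2-1)}{2\log q}<1$. Because the Hausdorff dimension of a self-similar set never exceeds its similarity dimension, $\dim_H K(q^2,B)<1$; that is, statement~(i) of Theorem~\ref{thm:equivalence-of-finite} holds for $(q^2,B)$.

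The third step closes the argument: $\gcd(p,q)=1$ immediately gives $\gcd(p,q^2)=1$, so the implication (i)~$\Rightarrow$~(iii) of Theorem~\ref{thm:equivalence-of-finite} yields $\#\big(D_p\cap K(q^2,B)\big)<+\f$, and the regrouping identity $K(q,A)=K(q^2,B)$ from the first step converts this into the desired $\#\big(D_p\cap K(q,\{0,1,\ldots,q-2,q\})\big)<+\f$. I do not anticipate any real obstacle here; the entire content is the two bookkeeping facts that $K(q,A)=K(q^2,qA+A)$ and that the digit set $A=\{0,1,\ldots,q-2,q\}$ produces the coincidence $q\cdot 0+q=q\cdot 1+0$ forcing $\#(qA+A)<q^2$, after which Theorem~\ref{thm:equivalence-of-finite} supplies the conclusion.
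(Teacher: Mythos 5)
Your proposal is correct and follows essentially the same route as the paper: the paper likewise observes $K(q,A)=K(q^2,qA+A)$ with $\#(qA+A)<q^2$ (forcing the dimension below $1$) and then invokes Theorem~\ref{thm:equivalence-of-finite}. Your extra details — the coincidence $q\cdot 0+q=q\cdot 1+0$ and the passage through $\gcd(p,q^2)=1$ — are just the bookkeeping the paper leaves implicit.
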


As another application of Theorem \ref{thm:equivalence-of-finite}, we partially extend \cite[Theorem 1.2]{JKLW-2024} and derive a uniform upper bound.

\begin{theorem}\label{thm:uniform-bound}
Let $q\in \mathbb{N}_{\ge 3}$ and $A \subset \mathbb{Q}$ with $\#(A-A) < q$. If $p\in \N_{\ge 2}$ with $\gcd(p,q)=1$, then we have
$$\sup_{\alpha \in \R}\#\big( (D_p+\alpha) \cap K(q,A) \big) <+\infty.$$
\end{theorem}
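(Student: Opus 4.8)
The plan is to use the translation-invariance of the additive group $D_p=\bigcup_{n\ge 1}p^{-n}\Z$ to replace the arbitrary real shift $\alpha$ by a shift adapted to the $q$-ary structure of $K(q,A)$, and then to reduce to counting rational points of $D_p$ inside $K(q,A-A)$, to which Theorem~\ref{thm:equivalence-of-finite} applies.

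First I would dispose of the trivial case: if $(D_p+\alpha)\cap K(q,A)=\emptyset$, the count is $0$. Otherwise, fix $x_0\in(D_p+\alpha)\cap K(q,A)$ and write $x_0=d_0+\alpha$ with $d_0\in D_p$. Since $D_p$ is a subgroup of $(\Q,+)$ we have $D_p-d_0=D_p$, hence $D_p+\alpha=D_p+x_0$ and so $(D_p+\alpha)\cap K(q,A)=(D_p+x_0)\cap K(q,A)$. It therefore suffices to bound $\#\big((D_p+x_0)\cap K(q,A)\big)$ uniformly over $x_0\in K(q,A)$. Fixing an expansion $x_0=\sum_{i\ge1}a_i/q^i$ with $a_i\in A$, for every $y\in(D_p+x_0)\cap K(q,A)$ pick an expansion $y=\sum_{i\ge1}b_i/q^i$ with $b_i\in A$; then $y-x_0\in D_p$ (group property), while $y-x_0=\sum_{i\ge1}(b_i-a_i)/q^i\in K(q,A-A)$ since $b_i-a_i\in A-A$ for all $i$. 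Hence $y\mapsto y-x_0$ injects $(D_p+x_0)\cap K(q,A)$ into $D_p\cap K(q,A-A)$, so
$$\#\big((D_p+\alpha)\cap K(q,A)\big)\le \#\big(D_p\cap K(q,A-A)\big),$$
and the right-hand side does not depend on $\alpha$.

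It remains to see that $\#\big(D_p\cap K(q,A-A)\big)<+\f$. Since $\#A\ge 2$ we have $2\le\#(A-A)<q$, so the similarity dimension $\log\#(A-A)/\log q$ of the IFS generating $K(q,A-A)$ is strictly less than $1$, whence $\dim_H K(q,A-A)<1$. Applying the implication (i)$\Rightarrow$(iii) of Theorem~\ref{thm:equivalence-of-finite} with digit set $A-A$ then yields $\#\big(D_p\cap K(q,A-A)\big)<+\f$ for every $p\in\N_{\ge2}$ with $\gcd(p,q)=1$, completing the proof. The substantive content is all in Theorem~\ref{thm:equivalence-of-finite}; within this argument the only delicate point is the first reduction, which crucially exploits that $D_p+\alpha$ depends on $\alpha$ only through its coset in $\R/D_p$, so that we may move the base point onto $K(q,A)$ and subtract it off — a step that would fail if we tried to apply the affine change of variables of Remark~\ref{remark:intersection}(i) directly, since $\alpha$ need not be rational.
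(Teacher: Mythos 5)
Your proof is correct. It rests on the same pivot as the paper's argument --- namely that differences of points of $(D_p+\alpha)\cap K(q,A)$ lie in $D_p\cap K(q,A-A)$, that $\#(A-A)<q$ forces $\dim_H K(q,A-A)\le \log\#(A-A)/\log q<1$, and that Theorem~\ref{thm:equivalence-of-finite} then makes $D_p\cap K(q,A-A)$ finite --- but you extract the uniform bound differently. The paper converts finiteness into a separation statement: $D_p\cap K(q,A-A)$ sits inside the rationals with denominator at most some $N$, so consecutive points of $(D_p+\alpha)\cap K(q,A)$ are at least $1/N$ apart, and a diameter pigeonhole gives the explicit bound $N\cdot\mathrm{diam}\,K(q,A)+1$. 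You instead use that $D_p$ is a subgroup of $(\Q,+)$ to replace $\alpha$ by a base point $x_0$ of the intersection, and then the translation $y\mapsto y-x_0$ injects $(D_p+\alpha)\cap K(q,A)$ directly into $D_p\cap K(q,A-A)$, yielding the uniform bound $\#\big(D_p\cap K(q,A-A)\big)$ with no metric input at all (no Ahlfors regularity of the difference set beyond what Theorem~\ref{thm:equivalence-of-finite} already used, no diameter, no spacing estimate). Both reductions are sound; yours is marginally cleaner and gives a bound that is intrinsic to $D_p\cap K(q,A-A)$, while the paper's gives a bound phrased in terms of a denominator cap and the diameter, which is the kind of quantitative statement one could track explicitly. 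Your closing remark is also apt: the affine rescaling of Remark~\ref{remark:intersection}(i) cannot absorb an irrational $\alpha$, which is exactly why either your coset argument or the paper's difference-set argument is needed.
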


Let $\lfloor x \rfloor$ denote the biggest integer not larger than $x$. If $A \subset \Q$ with $\# A \le \sqrt{q}$ or $A \subset \big\{ 0,1, \ldots, \lfloor \frac{q}{2}\rfloor -1 \}$ for $q \in \N_{\ge 4}$, then by Theorem \ref{thm:uniform-bound} we have $$\sup_{\alpha \in \R}\#\big( (D_p+\alpha) \cap K(q,A) \big) <+\infty$$ for any $p\in \N_{\ge 2}$ with $\gcd(p,q)=1$.

\subsection{Spectral eigenvalue problem for spectral measures}
A Borel probability measure $\mu$ on $\R$ is called a \emph{spectral measure} if there exists $\Lambda \subset \R$ such that $E(\Lambda) = \big\{ e_\lambda(x):=e^{2\pi \mathrm{i} \lambda x}: \lambda \in \Lambda \big\}$ forms an orthonormal basis in $L^2(\mu)$. The set $\Lambda$ is called a \emph{spectrum} of $\mu$.
The study of spectral measures originated from the spectral set conjecture raised by Fuglede \cite{Fuglede-1974}.
In 1998, Jorgensen and Pedersen \cite{Jorgensen-Pedersen-1998} discovered a class of singular continuous self-similar spectral measures.
Since then, a large number of singular continuous spectral measures have been constructed, see \cite{Strichartz-2000,Laba-Wang-2002,An-He-2014,An-Fu-Lai-2019,Dutkay-Haussermann-Lai-2019,
Li-Miao-Wang-2022,Lu-Dong-Zhang-2022} and the references therein.

In 2002, {\L}aba and Wang \cite{Laba-Wang-2002} found the scaling spectrum phenomenon: both $\Lambda$ and $2\Lambda$ are spectra of a singular continuous spectral measure.
This amazing phenomenon cannot occur in absolutely continuous spectral measures.
A real number $t \in \R$ is called a \emph{spectral eigenvalue} of $\mu$ if both $\Lambda$ and $t\Lambda$ are spectra of $\mu$ for some $\Lambda \subset \R$.
The following \emph{spectral eigenvalue problem} naturally arises.
\begin{quote}
  \emph{To find all spectral eigenvalues of a singular continuous spectral measure.}
\end{quote}
Fu, He, and Wen \cite{Fu-He-Wen-2018} addressed the spectral eigenvalue problem for Bernoulli convolutions, and showed that for spectral Bernoulli convolutions $\mu_{2k,\{0,1\}}$ with $k \in \N_{\ge 2}$, the real number $t \in \R$ is a spectral eigenvalue of $\mu_{2k,\{0,1\}}$ if and only if $t$ is the quotient of two odd integers. This result was extended by Fu and He \cite{Fu-He-2017} to self-similar spectral measures with consecutive digits and some Moran spectral measures.
Spectral eigenvalues of self-similar spectral measures with product-form digit set were also characterized in \cite{Li-Wu-2022,Jiang-Lu-Wei-2024}.
In higher dimensions, the spectral eigenmatrix problem for some self-affine measures has been investigated as well \cite{An-Dong-He-2022,Chen-Liu-2023,Liu-Tang-Wu-2023,Liu-Liu-Tang-Wu-2024}.
On the other hand, for a canonical spectrum $\Lambda$ of a spectral measure $\mu$, to find all real number $t\in \R$ such that $t\Lambda$ is also a spectrum of $\mu$ is the other type of spectral eigenvalue problem, see \cite{Jorgensen-Kornelson-Shuman-2011,Dutkay-Jorgensen-2012,Dai-2016,Dutkay-Haussermann-2016,
He-Tang-Wu-2019,Li-Wu-2022,Jiang-Lu-Wei-2024} and the references therein.
In this paper, we give a class of spectral eigenvalues for self-similar spectral measures in $\R$.

Let $N \in \N_{\ge 2}$ and let $B,L \subset \Z$ be finite sets with $\# B = \#L \ge 2$.
We say $(N,B,L)$ is a \emph{Hadamard triple} in $\R$ if the matrix
\begin{equation}\label{eq:matrix}
  M=\bigg( \frac{1}{\sqrt{\# B}} e^{2\pi \mathrm{i} \frac{b\ell}{N}} \bigg)_{b \in B, \ell \in L}
\end{equation}
is unitary, i.e., $M^* M = I$.
This implies all elements in $B$ are distinct modula $N$. Thus, we have $\# B \le N$.
{Let $\mu_{N,B}$ denote the self-similar measure associated with the IFS $\mathcal{F}_{N,B} = \big\{ \tau_b(x) = (x+b)/N: b \in B \big\}$ (cf.~\cite{Hutchinson-1981}), i.e., $\mu_{N,B}$ is the unique Borel probability measure satisfying} $$\mu = \frac{1}{\#B} \sum_{b \in B} \mu\circ\tau_b^{-1}.$$
%Let $\mu_{N,B}$ denote the self-similar measure with equal weights generated by the IFS (cf.~\cite{Hutchinson-1981}) $$\mathcal{F}_{N,B}=\bigg\{ \tau_b(x) = \frac{x+b}{N}: b \in B \bigg\}.$$
For a Hadamard triple $(N,B,L)$ in $\R$, {\L}aba and Wang \cite{Laba-Wang-2002} proved that the self-similar measure $\mu_{N,B}$ is a spectral measure, see \cite{Dutkay-Haussermann-Lai-2019} for generalization in higher dimensions. If $\#B =N$, then the self-similar spectral measure $\mu_{N,B}$ is absolutely continuous.
We focus on the case $\# B < N$.

\begin{theorem}\label{thm:spectral-eigenvalue}
  Let $(N,B,L)$ be a Hadamard triple in $\R$ with $\#B < N$.
  For any $p_1,p_2,\cdots,p_k \in \N_{\ge 2}$ with $\gcd(p_1p_2\ldots p_k, N) =1$, there exists $\Lambda \subset \R$ such that
  \[ p_1^{n_1} p_2^{n_2} \ldots p_k^{n_k} \Lambda \text{ is a spectrum of the self-similar measure $\mu_{N,B}$} \]
  for all tuples $(n_1,n_2,\ldots, n_k) \in \N_0^k$.
\end{theorem}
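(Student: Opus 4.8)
\emph{Setup and reduction.} Write $P=p_1p_2\cdots p_k$ and let $\mathcal M$ be the multiplicative monoid generated by $p_1,\ldots,p_k$; then $\gcd(P,N)=1$, every $t\in\mathcal M$ has $\gcd(t,N)=1$, and $\bigcup_{t\in\mathcal M}t^{-1}\Z=D_P$. By the standard normalisation (translating and rescaling the measure only translates and rescales candidate spectra, and keeps a Hadamard complement available) we may assume $0\in B$ and $\gcd(B-B)=1$, so that $|M_B(\xi)|=1\iff\xi\in\Z$, where $M_B(\xi)=\frac{1}{\#B}\sum_{b\in B}e^{2\pi\mathrm i b\xi}$. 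Since $\widehat{\mu_{N,B}}(t\xi)=\widehat{\mu_{N,tB}}(\xi)$ (because $\mu_{N,tB}$ is the image of $\mu_{N,B}$ under $x\mapsto tx$), the set $t\Lambda$ is a spectrum of $\mu_{N,B}$ if and only if $\Lambda$ is a spectrum of $\mu_{N,tB}$. Hence it suffices to construct one $\Lambda\subset\Z$ with $0\in\Lambda$ that is a spectrum of $\mu_{N,tB}$ for \emph{every} $t\in\mathcal M$ simultaneously.

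\emph{Orthonormality is for free.} If $(N,B,C)$ is a Hadamard triple and $\gcd(t,N)=1$, then so is $(N,tB,C)$: one needs $M_B(t(c-c')/N)=0$, and this follows from $M_B((c-c')/N)=0$ by a cyclotomic argument, since $e^{2\pi\mathrm i t(c-c')/N}$ and $e^{2\pi\mathrm i(c-c')/N}$ are primitive roots of unity of the same order, hence Galois conjugate, hence simultaneously roots or non-roots of the integer polynomial $\sum_{b\in B}z^{b}$. The same remark shows that whenever $\Lambda\subset\Z$ and $E(\Lambda)$ is orthonormal in $L^2(\mu_{N,B})$, it is orthonormal in $L^2(\mu_{N,tB})$ for all $t\in\mathcal M$. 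So the entire issue is \emph{completeness}.

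\emph{Reducing completeness to a finite set.} Fix $s\in\N$ and use $\mu_{N,B}=\mu_{N^s,B_s}$ with $B_s=B+NB+\cdots+N^{s-1}B$. For a Hadamard complement $C$ of $B_s$ in $\Z/N^s\Z$ with $0\in C$, put $\Lambda(N^s,C)=\{c_0+(N^s)c_1+\cdots+(N^s)^{n}c_n:n\ge 0,\ c_j\in C\}$. Since $\#C=(\#B)^s<N^s$ we get $\dim_H K(N^s,C)=\frac{\log\#B}{\log N}<1$, so Theorem~\ref{thm:equivalence-of-finite} (with $\gcd(P,N^s)=1$) makes $\mathcal E(C):=D_P\cap K(N^s,C)$ a \emph{finite} set. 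Now recall the extreme-cycle criterion \cite{Laba-Wang-2002,Dutkay-Jorgensen-2012,Dutkay-Haussermann-Lai-2019}: for a Hadamard triple $(N^s,B',C)$, the set $\Lambda(N^s,C)$ is a spectrum of $\mu_{N^s,B'}$ if and only if the IFS $\{x\mapsto(x+c)/N^s:c\in C\}$ has no nontrivial cycle $\{x_0,\ldots,x_{m-1}\}$ with $|M_{B'}(x_i)|=1$ for every $i$. Applying this with $B'=tB_s$ (legitimate, since $(N^s,tB_s,C)$ is Hadamard by the cyclotomic argument), and using $|M_{tB_s}(\xi)|=1\iff\xi\in t^{-1}\Z$ together with $t\,\Lambda(N^s,C)=\Lambda(N^s,tC)$, we find: $\Lambda:=\Lambda(N^s,C)$ is a spectrum of $\mu_{N,tB}$ for all $t\in\mathcal M$ as soon as no nontrivial cycle of $\{x\mapsto(x+c)/N^s:c\in C\}$ is contained in $\bigcup_{t\in\mathcal M}t^{-1}\Z=D_P$. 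As the points of any such cycle lie in $K(N^s,C)$, this reduces to the single requirement that $\{x\mapsto(x+c)/N^s:c\in C\}$ has no nontrivial cycle inside the finite set $\mathcal E(C)$.

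\emph{Construction and the main obstacle.} It remains to choose $s$ and the Hadamard complement $C$ (with $0\in C$) so that no nonzero cycle point of $\{x\mapsto(x+c)/N^s:c\in C\}$ lies in $D_P$. Two degrees of freedom are available: $s$ may be taken as large as needed, and the nonzero elements of a Hadamard complement may be shifted independently by multiples of $N^s$ without affecting the Hadamard property (this only translates branches of $\Lambda$). The plan is: start from a $C$ for which $\Lambda(N^s,C)$ is already a spectrum of $\mu_{N,B}$ (so that no nontrivial integer cycle occurs), note that a length-$m$ cycle has all its points in $\frac{1}{N^{sm}-1}\Z$, and exploit that for $s$ large the numbers $N^{s}-1,N^{2s}-1,\dots$ acquire prime factors outside $\{p_1,\ldots,p_k\}$ (the $P$-smooth integers being sparse); shifting the nonzero digits of $C$ so that such a prime survives in the reduced denominator of each of the finitely many candidate cycle values in $\mathcal E(C)$ then pushes every nonzero cycle value out of $D_P$. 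Carrying this out rigorously — keeping $\mathcal E(C)$ finite so that Theorem~\ref{thm:equivalence-of-finite} applies, controlling the cycle lengths that must be treated, and checking the Hadamard property after the shifts — is the technical heart of the proof and the step I expect to be the main obstacle. It is precisely here that the rational-point finiteness of Theorem~\ref{thm:equivalence-of-finite} is indispensable: it converts an a priori infinite list of potential obstructions into a finite one that can be eliminated by a single choice of $C$.
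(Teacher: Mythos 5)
Your first two reductions are fine: scaling the measure instead of the spectrum (so that $t\Lambda$ is a spectrum of $\mu_{N,B}$ iff $\Lambda$ is a spectrum of $\mu_{N,tB}$) and the cyclotomic/Galois argument for orthogonality are both correct, and the idea of invoking Theorem~\ref{thm:equivalence-of-finite} to make the set of ``dangerous'' cycle points finite is in the right spirit. But the proof is not complete: everything has been pushed into the final ``Construction'' step, which you yourself flag as the main obstacle, and that step is exactly what is missing. Concretely, you need a level $s$ and a Hadamard complement $C$ (with $0\in C$) such that \emph{no} nontrivial cycle of the IFS $\{x\mapsto (x+c)/N^s: c\in C\}$ meets $D_P$ (with $P=p_1\cdots p_k$). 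The proposed mechanism --- shifting nonzero digits of $C$ by multiples of $N^s$ and letting $s$ grow so that $N^{sm}-1$ acquires primes outside $\{p_1,\dots,p_k\}$ --- is not an argument: every shift changes $K(N^s,C)$, hence changes the finite set $\mathcal E(C)=D_P\cap K(N^s,C)$ that you are trying to clear, so there is no termination or stabilization claim; the cycles to be excluded range over all lengths $m$ (points in $\frac{1}{N^{sm}-1}\Z$), and no control of which lengths can occur is given; and you must simultaneously keep the property that $\Lambda(N^s,C)$ is a spectrum of $\mu_{N,B}$ itself (no nontrivial integer extreme cycle), which the shifts may destroy. Eliminating nontrivial extreme cycles by choosing digit representatives is precisely the delicate point in the Hadamard-triple literature, and nothing in your sketch shows it can be done here.

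The paper's proof shows that this elimination is unnecessary, which is the key difference of route. It scales the \emph{spectrum-side} digit set: by Lemma~\ref{lemma:multiply-p}, $(N,B,tp^{n_0}L)$ is a Hadamard triple for every $t=p_1^{n_1}\cdots p_k^{n_k}$, and it uses the Dutkay--Jorgensen construction (Theorem~\ref{thm:spectrum}, made explicit in Proposition~\ref{prop:construct-spectrum}), which builds a spectrum that \emph{contains} $-C$ for all $m_B$-cycles $C$ --- cycles are incorporated, not avoided. The only input from Theorem~\ref{thm:equivalence-of-finite} is that $K(N,L)\cap \frac{D_p}{d}$ is finite, hence equal to $K(N,L)\cap\frac{\Z}{dp^{n_0}}$ for some fixed $n_0$; this stabilization forces the cycle set for the digit set $tp^{n_0}L$ to be exactly $t$ times the cycle set for $p^{n_0}L$, whence the canonical spectra satisfy $\Lambda'=t\Lambda$ for every $t$ in the monoid, with a single fixed $\Lambda$ and no modification of digit sets, no control of cycle lengths, and no prime-factor analysis of $N^{sm}-1$. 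If you want to salvage your approach, the honest comparison is that your plan still requires proving a nontrivial ``no extreme cycles in $D_P$'' existence statement, whereas the paper's stabilization argument closes the proof with the tools already established.
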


As a direct corollary, we obtain a class of spectral eigenvalues for general self-similar spectral measures, and moreover, {every eigen-subspace associated with these spectral eigenvalues is infinite.}

\begin{corollary}\label{cor:eigenvalue}
  Let $(N,B,L)$ be a Hadamard triple in $\R$ with $\#B < N$.
  Write \[ \mathcal{E}_N:= \bigg\{ \frac{v}{u}:\; u,v \in \N,\; \gcd(u,v)=1,\; \gcd(uv,N)=1  \bigg\}. \]
  Then any rational number $t \in \mathcal{E}_N$ is a spectral eigenvalue of $\mu_{N,B}$.
  Moreover, for any $t_1,t_2,\ldots,t_k \in \mathcal{E}_N$, the eigen-subspace \[ \big\{ \Lambda \subset \R:\; 0 \in \Lambda\;\text{ and }\; \Lambda, t_1\Lambda, t_2 \Lambda,\ldots,t_k \Lambda \text{ are spectra of $\mu_{N,B}$} \big\} \]
  is infinite.
\end{corollary}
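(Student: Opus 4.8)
The plan is to derive both assertions from Theorem~\ref{thm:spectral-eigenvalue} by choosing the primes to match the numerators and denominators at hand and then rescaling a suitable spectrum. For the first claim, fix $t\in\mathcal{E}_N$ and write $t=v/u$ with $u,v\in\N$, $\gcd(u,v)=1$ and $\gcd(uv,N)=1$; the case $t=1$ is trivial, so assume $uv>1$ and let $p_1,\ldots,p_k$ be the distinct prime divisors of $uv$. Then $\gcd(p_1p_2\cdots p_k,N)=1$, so Theorem~\ref{thm:spectral-eigenvalue} yields $\Lambda_0\subset\R$ for which $p_1^{n_1}\cdots p_k^{n_k}\Lambda_0$ is a spectrum of $\mu_{N,B}$ for every $(n_1,\ldots,n_k)\in\N_0^k$. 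Since $u$ and $v$ are each products of $p_1,\ldots,p_k$ with nonnegative integer exponents, both $u\Lambda_0$ and $v\Lambda_0$ occur among these sets; taking $\Lambda:=u\Lambda_0$ we get that $\Lambda$ and $t\Lambda=v\Lambda_0$ are spectra of $\mu_{N,B}$, so $t$ is a spectral eigenvalue.

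For the ``moreover'' part I would run this simultaneously for $t_1,\ldots,t_k$. Write $t_i=v_i/u_i$ in lowest terms, and let $p_1,\ldots,p_m$ be a nonempty finite list of primes, each coprime to $N$, containing every prime divisor of $\prod_{i=1}^{k}u_iv_i$ (such a list exists since $N$ has only finitely many prime factors). By Theorem~\ref{thm:spectral-eigenvalue} there is $\Lambda_0\subset\R$ such that $p_1^{n_1}\cdots p_m^{n_m}\Lambda_0$ is a spectrum for all tuples in $\N_0^m$. We may assume $0\in\Lambda_0$: replacing $\Lambda_0$ by $\Lambda_0-\lambda_*$ for some $\lambda_*\in\Lambda_0$ replaces each $p_1^{n_1}\cdots p_m^{n_m}\Lambda_0$ by one of its translates, and a translate of a spectrum is again a spectrum since multiplication by the unimodular function $x\mapsto e^{2\pi\mathrm{i}cx}$ is a unitary on $L^2(\mu_{N,B})$. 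Now put $U:=\prod_{i=1}^{k}u_i$ and $\Gamma_n:=Up_1^n\Lambda_0$ for $n\in\N_0$. Then $0\in\Gamma_n$; the integer $Up_1^n$ is a product of $p_1,\ldots,p_m$ with nonnegative exponents, so $\Gamma_n$ is a spectrum; and for each $i$ we have $t_i\Gamma_n=v_i(U/u_i)p_1^n\Lambda_0$ with $U/u_i=\prod_{j\ne i}u_j\in\N$, so $v_i(U/u_i)p_1^n$ is again such a product and $t_i\Gamma_n$ is a spectrum too. Hence every $\Gamma_n$ belongs to the eigen-subspace in the statement.

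Finally I would check that the $\Gamma_n$ are pairwise distinct, which is the only place anything beyond bookkeeping enters. Any spectrum $\Lambda$ of $\mu_{N,B}$ is uniformly discrete: $\widehat{\mu_{N,B}}$ is continuous with $\widehat{\mu_{N,B}}(0)=1$ and vanishes on $(\Lambda-\Lambda)\setminus\{0\}$, so for some $\delta>0$ any two distinct points of $\Lambda$ are at distance $\ge\delta$. As $\Lambda_0$ is infinite (since $L^2(\mu_{N,B})$ is infinite-dimensional) and contains $0$, the number $r:=\inf\{\,|\lambda|:\lambda\in\Lambda_0\setminus\{0\}\,\}$ satisfies $r\ge\delta>0$, whence $\inf\{\,|\gamma|:\gamma\in\Gamma_n\setminus\{0\}\,\}=Up_1^nr$ is strictly increasing in $n$ and the sets $\Gamma_n$ are pairwise different. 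Thus the eigen-subspace is infinite. I expect the main source of trouble to be the prime-factorization bookkeeping in the middle paragraph --- making sure every scalar that multiplies $\Lambda_0$ really is a nonnegative-power monomial in the chosen primes $p_1,\ldots,p_m$ --- rather than the short uniform-discreteness argument at the end.
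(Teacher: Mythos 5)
Your proposal is correct and follows exactly the route the paper intends: the paper states this as a direct consequence of Theorem~\ref{thm:spectral-eigenvalue} without a separate proof, and your derivation (choosing the primes dividing the numerators and denominators, rescaling the resulting $\Lambda$, and using translation plus uniform discreteness of spectra to get $0\in\Lambda$ and infinitely many distinct $\Gamma_n$) is the intended bookkeeping, carried out correctly.
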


Note that the set $\mathcal{E}_N$ is dense in $[0,+\f)$.
In general, the set $\mathcal{E}_N$ only provides some spectral eigenvalues of self-similar spectral measure.
For self-similar spectral measures studied in \cite{Li-Wu-2022}, the set $\mathcal{E}_N$ offers all spectral eigenvalues, and the sufficiency part of \cite[Theorem 1.9]{Li-Wu-2022} follows directly from Corollary \ref{cor:eigenvalue}.
In terms of eigen-subspace, there exist some spectral eigenvalues of spectral Bernoulli convolutions, whose eigen-subspace is of continuum cardinality \cite{Fu-He-Wen-2018}.
By Corollary \ref{cor:eigenvalue}, we have known that any eigen-subspace of spectral eigenvalues in $\mathcal{E}_N$ is infinite.
It is of interest to consider the following question.

\noindent
\textbf{Question:} Let $(N,B,L)$ be a Hadamard triple in $\R$ with $\#B < N$.
For any $t \in \mathcal{E}_N$, is the eigen-subspace $ \big\{ \Lambda \subset \R:\; 0 \in \Lambda\;\text{ and }\; \Lambda, t\Lambda \text{ are spectra of } \mu_{N,B} \big\}$ of continuum cardinality?

The paper is organized as follows: in section \ref{sec:intersection-Q}, we focus on rational points in Cantor sets, and prove {Theorems \ref{thm:equivalence-of-finite} and \ref{thm:uniform-bound}}; we study spectral eigenvalues of self-similar spectral measures in section \ref{sec:spectral-eigenvalue}, and prove Theorem \ref{thm:spectral-eigenvalue}.

\section{Finiteness of the intersection  $D_p\cap K(q, A)$ for $A\subset\mathbb Q$}\label{sec:intersection-Q}

Given $p\in \N_{\ge 2}, q \in \N_{\ge 3}$ with $\gcd(p,q)=1$ and a finite set $A\subset \mathbb Q$, we show in this section that if $\dim_H K(q, A) < 1$ then the intersection $D_p\cap K(q, A)$ is finite, and then we prove Theorem \ref{thm:uniform-bound}.

For a finite set $\Sigma$, let $\Sigma^* := \{ \mathbf{i}=i_1 i_2 \ldots i_n: n \in \N, i_1, i_2, \ldots i_n \in \Sigma \}$ be the set of all finite words over the alphabet $\Sigma$.
Given a self-similar IFS $\mathcal G=\big\{ g_i(x)=r_i x+b_i \big\}_{i=1}^N$ in $\mathbb R$, for $\mathbf{i}=i_1 i_2 \ldots i_n \in \{1,2,\ldots,N\}^*$, define $g_{\mathbf{i}}:= g_{i_1} \circ g_{i_2} \circ \ldots \circ g_{i_n}$.
The IFS $\mathcal{G}$ is said to satisfy the \emph{weak separation property} (WSP) (cf. Lau and Ngai \cite{Lau-Ngai-1999} and Zerner \cite{Zerner1996}) if the identity map is not an accumulation point of \[ \mathcal N_{\mathcal G}:=\set{g_{\mathbf i}^{-1}\circ g_{\mathbf j}: {\mathbf i, \mathbf j\in\set{1,2,\ldots, N}^*}}.\]
Let $K$ be a self-similar set in $\R$ generated by an IFS $\mathcal G$.
Fraser et al. \cite[Theorem 2.1]{FHOR2015} showed that if the IFS $\mathcal{G}$ satisfies the WSP and $K$ is not a singleton, then $K$ is Ahlfors regular.
Recall that a set $K \subset \mathbb{R}^d$ with Hausdorff dimension $s$ is called \emph{Ahlfors regular} if there exists a constant $c\ge1$ such that for all $0< r < 1$ and all $x\in K$,
$$ c^{-1} r^s \le \mathcal{H}^{s}\big( K\cap B(x,r) \big) \le cr^s, $$
where $\mathcal{H}^{s}$ is the $s$-dimensional Hausdorff measure, and $B(x,r)$ is a ball centered at $x$ with radius $r$.

Next we show that the IFS ${\mathcal F}_{q, A}$ defined in (\ref{eq:IFS-q-A}) satisfies the WSP. This can be deduced from Ngai and Wang \cite[Theorem 2.7]{Ngai-Wang-2001} and Nguyen \cite{Nguyen-2002}. For the reader's convenience  we give a direct proof.
The following equivalent condition of WSP was proved by Zerner \cite{Zerner1996}, {and we give a slightly differrent formulation in the book \cite{BSS2023}}.

\begin{proposition}[{{\cite[Theorem 4.2.4]{BSS2023}}}]
  \label{prop:equivalence-WSP}
  Let $K$ be a self-similar set in $\R$ generated by an IFS $\mathcal G$, which is not a singleton. Then the IFS $\mathcal G$ satisfies the WSP, if and only if for any $\rho >1$ there exist $x_0\in \R$ and $\epsilon>0$ such that
\begin{equation*}\label{equibalence-of-wsp}
	\forall~h\in {\mathcal{N_{\mathcal G}}} \quad\textrm{with}\quad r_h\in[\rho^{-1},\rho], \quad h(x_0)\ne x_0 \quad\Longrightarrow\quad |h(x_0)-x_0|>\epsilon,
\end{equation*}
where $r_h$ denotes the contraction/dilation coefficient of a similarity map $h$.
\end{proposition}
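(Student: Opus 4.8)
\noindent\emph{Proof proposal.} The plan is to recast the WSP in pointwise form, dispatch the easy implication by a direct conjugation argument, and for the hard implication reduce matters to a geometric (scale-by-scale) form of the WSP already in the literature.

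First I would note that $\mathrm{id}=g_{\mathbf i}^{-1}\circ g_{\mathbf i}\in\mathcal N_{\mathcal G}$, so the WSP says precisely that $\mathrm{id}$ is an isolated point of $\mathcal N_{\mathcal G}$ in the topology of uniform convergence on compacta. Writing a similarity $h\in\mathcal N_{\mathcal G}$ as $h(x)=r_hx+t_h$ and using that $h_n\to\mathrm{id}$ iff $(r_{h_n},t_{h_n})\to(1,0)$, this is equivalent to the existence of $\delta>0$ with $|r_h-1|+|t_h|\ge\delta$ for every $h\in\mathcal N_{\mathcal G}\setminus\{\mathrm{id}\}$. I will also use two consequences of $g_{\mathbf u}\circ g_{\mathbf v}=g_{\mathbf u\mathbf v}$: the set $\mathcal N_{\mathcal G}$ is symmetric, and $g_{\mathbf u}^{-1}\circ h\circ g_{\mathbf v}\in\mathcal N_{\mathcal G}$ for all finite words $\mathbf u,\mathbf v$; in particular a conjugate $g_{\mathbf v}^{-1}\circ h\circ g_{\mathbf v}$ again lies in $\mathcal N_{\mathcal G}$, keeps the coefficient $r_h$, and amounts to moving the evaluation point by $g_{\mathbf v}^{-1}$. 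Finally, since $K$ is not a singleton, $g_1,\dots,g_N$ have no common fixed point (otherwise that point would be the attractor).

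For ``$\Leftarrow$'', assuming the displayed condition, suppose $h_n\in\mathcal N_{\mathcal G}\setminus\{\mathrm{id}\}$ with $h_n\to\mathrm{id}$, and take $x_0,\epsilon$ from the hypothesis applied to $\rho=2$. I would first produce, for each $n$, a word $\mathbf v_n$ with $1\le|\mathbf v_n|\le 2$ such that $z_n:=g_{\mathbf v_n}(x_0)$ is \emph{not} a fixed point of $h_n$: if $r_{h_n}=1$ then $h_n$ is a non-trivial translation and any one-letter word works, while if $r_{h_n}\ne1$ then $h_n$ has a unique fixed point $p_n$, and ``every word of length $\le2$ sends $x_0$ to $p_n$'' would force $g_i(x_0)=p_n$ and $g_i(p_n)=g_{ii}(x_0)=p_n$ for all $i$, a common fixed point. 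Then $\tilde h_n:=g_{\mathbf v_n}^{-1}\circ h_n\circ g_{\mathbf v_n}\in\mathcal N_{\mathcal G}$ has $r_{\tilde h_n}=r_{h_n}\to1$ (hence $r_{\tilde h_n}\in[\tfrac12,2]$ eventually), while $\tilde h_n(x_0)-x_0=r_{\mathbf v_n}^{-1}\bigl(h_n(z_n)-z_n\bigr)\ne0$ and $|h_n(z_n)-z_n|=|(r_{h_n}-1)z_n+t_{h_n}|\to0$, the convergence holding because $z_n$ stays in the finite set $\{g_{\mathbf v}(x_0):1\le|\mathbf v|\le2\}$ and $r_{\mathbf v_n}\ge(\min_i r_i)^2>0$. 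This contradicts the hypothesis, so the WSP holds.

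For ``$\Rightarrow$'', fix $\rho>1$, let $\delta$ be as above, put $\eta:=\delta/2$, and conjugate the IFS by a similarity (which affects neither the WSP nor the assertion) so that the candidate point may be taken with $|x_0|\le\tfrac12$. For $h\in\mathcal N_{\mathcal G}\setminus\{\mathrm{id}\}$ with $r_h\in[\rho^{-1},\rho]$: if $|r_h-1|<\eta$ then $|t_h|\ge\eta$, so $|h(x_0)-x_0|\ge|t_h|-|r_h-1|\,|x_0|\ge\eta/2$; and if $|r_h-1|\ge\eta$ then $h$ has a unique fixed point $p_h$ and $|h(x_0)-x_0|=|r_h-1|\,|x_0-p_h|\ge\eta\,|x_0-p_h|$. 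Thus the whole problem reduces to choosing $x_0$ with $|x_0|\le\tfrac12$ at positive distance from the set of fixed points of the relative maps with $r_h\in[\rho^{-1},\rho]$ and $|r_h-1|\ge\eta$ (then $\epsilon$ a fixed fraction of $\min\{\eta/2,\ \eta\,\mathrm{dist}(x_0,P)\}$ works). This last step is exactly where the full strength of the WSP is needed rather than only the ``$\mathrm{id}$ isolated'' form: one invokes Zerner's scale-by-scale version of the WSP --- the uniform bound on the number of cylinders of comparable diameter meeting a fixed one --- noting that the fixed-point relation $g_{\mathbf a}(p_h)=g_{\mathbf b}(p_h)$ ties the location of $p_h$ to an overlap of two comparable-scale cylinders $g_{\mathbf a}(K),g_{\mathbf b}(K)\subseteq K$, and a covering argument built on that uniform bound then yields a base point (and a scale) avoided by all these fixed points by a fixed margin. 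I expect this reduction --- from ``$\mathrm{id}$ isolated in $\mathcal N_{\mathcal G}$'' to the uniform finiteness of overlaps, and thence to the desired $x_0,\epsilon$ --- to be the only genuine obstacle; everything else is the bookkeeping indicated above.
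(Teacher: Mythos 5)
First, a point of comparison: the paper does not prove this proposition at all — it is quoted from Zerner \cite{Zerner1996} via \cite[Theorem 4.2.4]{BSS2023} — so your attempt has to stand on its own. Your ``$\Leftarrow$'' direction does stand: conjugating by a short word, $\tilde h_n=g_{\mathbf v_n}^{-1}\circ h_n\circ g_{\mathbf v_n}\in\mathcal N_{\mathcal G}$ with $|\mathbf v_n|\le 2$ chosen so that $g_{\mathbf v_n}(x_0)$ is not fixed by $h_n$ (possible because the maps of $\mathcal G$ have no common fixed point, $K$ not being a singleton), keeps the ratio, moves the evaluation point, and produces the required contradiction with the hypothesis at $\rho=2$; this is correct and complete.

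The ``$\Rightarrow$'' direction, however, contains a genuine gap, and it is precisely the step you yourself label ``the only genuine obstacle''. After the (correct) reduction, everything hinges on producing a point $x_0$ whose punctured neighbourhood avoids the fixed points $p_h$ of all $h\in\mathcal N_{\mathcal G}$ with $r_h\in[\rho^{-1},\rho]$ and $|r_h-1|\ge\eta$; a priori this countable set could accumulate at every candidate point, and the mere isolation of the identity in $\mathcal N_{\mathcal G}$ gives no direct control over it. You propose to obtain this from ``Zerner's scale-by-scale version of the WSP'' (a uniform bound on the number of comparable-scale cylinders meeting a given one) together with an unspecified covering argument, but (a) that bounded-multiplicity statement is itself one of Zerner's equivalent reformulations of the WSP, of essentially the same depth as the proposition you are proving, so invoking it as a black box largely assumes what is to be shown; and (b) the bridge you sketch is not sound as stated: the relation $g_{\mathbf a}(p_h)=g_{\mathbf b}(p_h)$ expresses agreement of two cylinder maps at a single point that need not lie in, or even near, $K$, so it does not by itself yield an overlap $g_{\mathbf a}(K)\cap g_{\mathbf b}(K)\ne\emptyset$ to which a bounded-overlap count could be applied — one would need a quantitative ``cylinders at comparable distance'' variant and then actually execute the covering argument, neither of which is done. (The normalization ``conjugate so that the candidate point may be taken with $|x_0|\le\tfrac12$'' is also circular as phrased, though harmless if read as normalizing $K\subset[-\tfrac12,\tfrac12]$ and choosing $x_0$ there.) Until the hard step is carried out, the forward implication remains unproven; the safe alternative is simply to cite it, as the paper does.
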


\begin{lemma}\label{IFS-is-WSP}
Let  $q\in \mathbb N_{\ge 3}$ and let $A\subset\mathbb Q$ be a finite set. Then  the IFS ${\mathcal F}_{q, A}$ defined in (\ref{eq:IFS-q-A}) satisfies the WSP.
\end{lemma}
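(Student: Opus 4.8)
The goal is to verify the WSP for $\mathcal{F}_{q,A}$ with $q \in \N_{\ge 3}$ and $A \subset \Q$ finite, using the criterion in Proposition~\ref{prop:equivalence-WSP}. All maps $f_a(x) = (x+a)/q$ have the same contraction ratio $1/q$, so for a word $\mathbf{i} = i_1 \ldots i_n$ the composition $f_{\mathbf{i}}$ is a similarity with ratio $q^{-n}$. An element $h = f_{\mathbf{i}}^{-1} \circ f_{\mathbf{j}} \in \mathcal{N}_{\mathcal{F}_{q,A}}$ therefore has ratio $r_h = q^{m-n}$ where $n = |\mathbf{i}|$, $m = |\mathbf{j}|$. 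The first observation is that the constraint $r_h \in [\rho^{-1}, \rho]$ forces $|m - n|$ to lie in a fixed finite range, so it suffices to handle, for each fixed value $d = m - n \in \Z$ with $q^{|d|} \le \rho$, the maps $h$ with $r_h = q^d$; and one can even reduce to $d = 0$ by composing with a fixed generator a bounded number of times, or simply bound the set of translation parts directly.

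**The arithmetic core.** Compute $h$ explicitly. Writing $f_{\mathbf{i}}(x) = q^{-n}x + \sum_{\ell=1}^n a_{i_\ell} q^{-\ell}$ (with $a_{i_\ell} \in A$), one gets $f_{\mathbf{i}}^{-1}(y) = q^n y - \sum_{\ell=1}^n a_{i_\ell} q^{n-\ell}$, and hence
\[
h(x) = q^{n-m} x + q^n \sum_{\ell=1}^m a_{j_\ell} q^{-\ell} - \sum_{\ell=1}^n a_{i_\ell} q^{n-\ell}.
\]
The translation part is $t_h := q^{n-m}\sum_{\ell=1}^m a_{j_\ell} q^{m-\ell} - \sum_{\ell=1}^n a_{i_\ell} q^{n-\ell}$; when $r_h = 1$, i.e.\ $m = n$, this is $\sum_{\ell=1}^n (a_{j_\ell} - a_{i_\ell}) q^{n-\ell}$, a $\Z[1/q]$-combination of differences of elements of $A$. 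The key point is that $A \subset \Q$ is finite, so there is a common denominator $Q \in \N$ with $QA \subset \Z$; then $Q \cdot t_h \in \Z[1/q]$, and moreover the differences $a_{j_\ell} - a_{i_\ell}$ have bounded numerator. One shows that $Q t_h$, cleared of the $q$-power denominator, is an integer whose size is controlled: the standard argument is that the set of translation parts of $\{h \in \mathcal{N}_{\mathcal{F}_{q,A}} : r_h = q^d\}$ that can move a chosen point $x_0$ close to itself is contained in a set of the form $q^{-s}\Z \cap [-C, C]$ for fixed $s$ and $C$ depending only on $q, A, \rho$, hence is finite; therefore the nonzero displacements $|h(x_0) - x_0|$ are bounded below by the smallest positive element of this finite set. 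Concretely I would pick $x_0 = 0$ (or any fixed rational), note $h(0) = t_h$, and prove $t_h \in \frac{1}{Q q^{N_0}}\Z$ for a uniform $N_0$ once $r_h$ is pinned down — the point being that although $n, m$ are unbounded, the factor $q^{n-m}$ combined with telescoping makes the denominator of $t_h$ bounded.

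**Handling the denominator blow-up — the main obstacle.** The subtlety, and the step I expect to require the most care, is that $\sum_{\ell=1}^n a_{i_\ell} q^{n-\ell}$ is a priori a large integer (after clearing $Q$), not obviously bounded, and similarly for the $\mathbf{j}$-sum; their difference need not be small just because the ratios match. So a naive "bounded numerator" claim is false — what is true is more delicate. The correct route, as in Ngai--Wang, is to observe that $h(x_0) - x_0$ small together with $h$ being a $q^d$-scaling means $h$ nearly fixes a point, and then to use that $h^{-1}$ and compositions $h^k$ also lie in $\mathcal{N}_{\mathcal{F}_{q,A}}$ (which is a sub-semigroup-with-inverses structure) to force $t_h$ into a lattice: if $|t_h|$ were positive but arbitrarily small, iterating would produce elements of $\mathcal{N}_{\mathcal{F}_{q,A}}$ with ratio $1$ and translation parts accumulating at $0$, contradicting that all such translation parts lie in the discrete set $\frac{1}{Q}\Z[1/q] \cap \R$ — wait, $\Z[1/q]$ is not discrete, so one must instead bound the $q$-adic denominator. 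Here is the resolution I would use: among maps with $r_h = 1$, the translation part $t_h = \sum_{\ell=1}^{n}(a_{j_\ell}-a_{i_\ell})q^{n-\ell}$ lies in $\frac{1}{Q}\Z$ (an actual integer over $Q$!) because every exponent $n - \ell \ge 0$. Thus $\{t_h : r_h = 1\} \subset \frac{1}{Q}\Z$, which is discrete. Combined with the fact that maps of ratio $q^d$ for the finitely many relevant $d$ compose with fixed ratio-$q^{-d}$ generators to give ratio-$1$ maps with comparably-sized translation parts, one concludes: for $\rho$ given, the set of $h \in \mathcal{N}_{\mathcal{F}_{q,A}}$ with $r_h \in [\rho^{-1}, \rho]$ and $|h(x_0) - x_0| \le 1$ has translation parts in a finite subset of $\frac{1}{Q'}\Z$, so the required $\epsilon$ exists. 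I would state the lemma's proof by taking $x_0 = 0$, verifying the displayed implication in Proposition~\ref{prop:equivalence-WSP} via this discreteness of $\frac1Q\Z$, and spelling out the reduction from $r_h = q^d$ to $r_h = 1$.
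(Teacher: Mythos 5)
Your proposal is correct and takes essentially the same route as the paper's proof: verify Zerner's criterion (Proposition~\ref{prop:equivalence-WSP}) at $x_0=0$, write $h=f_{\mathbf i}^{-1}\circ f_{\mathbf j}$ explicitly, clear denominators with a common integer $Q$ (the paper's $N$), and note that after factoring out the ratio $q^{n-m}\in[\rho^{-1},\rho]$ the displacement $h(0)$ is $\tfrac1Q$ times a nonzero integer, hence $|h(0)|\ge (Q\rho)^{-1}$. Your detour through the ratio-one case followed by composition with generator words is merely a repackaging of the paper's direct two-case computation ($n\ge m$ versus $n<m$), with the same arithmetic core.
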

\begin{proof}
Since $A\subset\mathbb Q$ is a finite set, we can choose $N\in \N$ such that $N a \in \Z$ for any $a \in A$.
Given $\rho>1$, let $x_0=0$ and  $\epsilon=(N\rho)^{-1}$. Let $h=f_{\mathbf i}^{-1}\circ f_{\mathbf j}$ for some $\mathbf{i}=i_1i_2\ldots i_{n}$, $\mathbf{j}=j_1j_2\ldots j_m \in A^{*}$. Observe that
\begin{align*}
f_{\mathbf{i}}^{-1}(x)&=f_{i_n}^{-1}\circ {f_{i_{n-1}}^{-1}}\circ\ldots\circ f_{i_1}^{-1}(x)=q^{n}x-\sum_{k=1}^{n} i_k q^{n-k},\\
f_{\mathbf{j}}(x)&=f_{j_1}\circ f_{j_2}\circ\ldots \circ f_{j_m}(x)=\frac{x}{q^m}+\sum_{k=1}^{m}\frac{{j_k}}{q^k}.
\end{align*}
Then
\begin{equation}\label{eq:formula-of-h}
	h(x)=f_{\mathbf i}^{-1}\circ f_{\mathbf j}(x)=q^{n-m}x+\sum_{k=1}^{m} {j_k} q^{n-k} -\sum_{k=1}^{n} {i_k} q^{n-k}.
\end{equation}

Suppose $r_h=q^{n-m}\in[\rho^{-1}, \rho]$ and $h(0)\ne 0$. If
 $n\ge m$, then by (\ref{eq:formula-of-h}) it follows that
\begin{equation*}
\begin{aligned}
	|N h(0)|=\left|\sum_{k=1}^{m} N {j_k} q^{n-k} -\sum_{k=1}^{n} N {i_k} q^{n-k} \right|\ge 1 \ge \rho^{-1},
\end{aligned}
\end{equation*}
where the first inequality follows since $h(0)\ne 0$ and $N a\in\mathbb Z$ for any $a\in A$.
If $n<m$, then by (\ref{eq:formula-of-h}) we obtain that
\begin{equation*}
\begin{aligned}
	|Nh(0)|&= q^{n-m} \left|\sum_{k=1}^{m} N {j_k} q^{m-k} -\sum_{k=1}^{n} N {i_k} q^{m-k}\right|\ge q^{n-m} \ge \rho^{-1}.
\end{aligned}
\end{equation*}
Thus we conclude that $|h(0)|\ge (N\rho)^{-1}=\epsilon$.  This completes the proof by Proposition \ref{prop:equivalence-WSP}.
\end{proof}

Recall from (\ref{eq:self-similar-set-K}) that each $x\in K(q, A)$ can be written as $x=\sum_{i=1}^{\f} a_i q^{-i}$ with each $a_i \in A$. The infinite sequence $(a_i)= a_1 a_2\ldots\in A^\N$ is called a \emph{coding} of $x$.
The following lemma is inspired by Schleischitz~\cite[Theorem 4.3]{Schleischitz-2021}.

\begin{lemma}\label{lem:order-upper-bound}
Let $q\in\N_{\ge 3}$ and let $A  \subset \mathbb Q$ be a finite set. If $s=\dim_{H}K(q, A) <1 $, then there exists a constant $c_1>0$ such that any rational number $\xi = v/u \in K(q, A)$ with $u \in \N$ and $\gcd(v,u)=1$ admits an eventually periodic coding with the periodic length $\le c_1 u^{s}.$
\end{lemma}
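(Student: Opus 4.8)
The plan is to combine the Ahlfors regularity of $K(q,A)$ with the elementary observation that the shift orbit of a rational point $\xi=v/u$ is confined to an arithmetic progression of mesh $\tfrac{1}{Nu}$. Fix $N\in\N$ with $Na\in\Z$ for every $a\in A$. For $x\in K(q,A)$ with a coding $(a_i)\in A^\N$ and for $n\ge0$, set
\[ x_n:=q^{n}x-\sum_{i=1}^{n}a_iq^{n-i}=\sum_{j=1}^{\infty}a_{n+j}q^{-j}, \]
the $n$-th tail; then $x_n\in K(q,A)$, $x_0=x$, and $x=\sum_{i=1}^{n}a_iq^{-i}+q^{-n}x_n$. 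Applying this to $\xi=v/u$ with $\gcd(v,u)=1$ and a coding $(a_i)$ of $\xi$ with tails $\xi_n$, one computes $Nu\,\xi_n=Nq^{n}v-u\sum_{i=1}^{n}(Na_i)q^{n-i}\in\Z$, so every $\xi_n$ lies in $K(q,A)\cap\frac{1}{Nu}\Z$. In particular, the distinct values among $\{\xi_n:n\ge0\}$ form a $\frac{1}{Nu}$-separated subset of $K(q,A)$.

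Next I would bound the number of such points. By Lemma \ref{IFS-is-WSP} the IFS $\mathcal F_{q,A}$ satisfies the WSP, and since $\#A\ge2$ the set $K(q,A)$ is not a singleton; hence by \cite[Theorem 2.1]{FHOR2015} it is Ahlfors $s$-regular with $s=\dim_HK(q,A)$. Thus $\mathcal H^s(K(q,A))<\infty$ and there is $c\ge1$ with $\mathcal H^s\big(K(q,A)\cap B(x,r)\big)\ge c^{-1}r^{s}$ for all $x\in K(q,A)$ and $0<r<1$. For a $\delta$-separated set $S\subset K(q,A)$ with $0<\delta\le1$, the sets $K(q,A)\cap B(x,\delta/2)$, $x\in S$, are pairwise disjoint subsets of $K(q,A)$, each of $\mathcal H^s$-measure at least $c^{-1}(\delta/2)^{s}$; summing over $S$ gives $\#S\cdot c^{-1}(\delta/2)^{s}\le\mathcal H^s(K(q,A))$, i.e. $\#S\le 2^{s}c\,\mathcal H^s(K(q,A))\,\delta^{-s}=:C\delta^{-s}$ with $C$ depending only on $q$ and $A$. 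Taking $\delta=1/(Nu)\le1$ shows that $\xi$ has at most $C(Nu)^{s}=CN^{s}u^{s}$ distinct tails.

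Finally I would turn the finiteness of the set of tails into periodicity of a coding. Since the number of distinct tails is an integer bounded by $CN^{s}u^{s}$, among $\xi_0,\xi_1,\dots,\xi_{m}$ with $m:=\lfloor CN^{s}u^{s}\rfloor$ two must coincide, say $\xi_{n_1}=\xi_{n_2}$ with $0\le n_1<n_2\le m$, so $n_2-n_1\le CN^{s}u^{s}$. Put $S:=\sum_{i=1}^{n_2-n_1}a_{n_1+i}q^{-i}$. Both $\xi_{n_1}$ and the number with coding $(a_{n_1+1}\dots a_{n_2})^{\infty}$ satisfy $y=S+q^{-(n_2-n_1)}y$, which has a unique solution because $q^{-(n_2-n_1)}<1$; hence these two numbers are equal, and therefore $a_1a_2\dots a_{n_1}\,(a_{n_1+1}\dots a_{n_2})^{\infty}$ is a coding of $\xi=\sum_{i=1}^{n_1}a_iq^{-i}+q^{-n_1}\xi_{n_1}$. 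This coding is eventually periodic with period $n_2-n_1\le c_1u^{s}$ for $c_1:=CN^{s}$.

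The only step carrying real content is the count of $\delta$-separated points, which is precisely where the hypothesis $s<1$ is used — through the Ahlfors regularity coming from Lemma \ref{IFS-is-WSP} and \cite[Theorem 2.1]{FHOR2015}, not merely through the value of the Hausdorff dimension. Two minor points need care: a rational point may have several codings, so equality of tails is only equality of real numbers and the periodic coding must be produced via the contraction fixed-point equation rather than a digitwise comparison; and the constant $c_1$ must be independent of $\xi$, which it is since $C$ and $N$ depend only on $q$ and $A$.
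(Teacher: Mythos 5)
Your proof is correct and takes essentially the same route as the paper's: the tails $\xi_n$ of a coding of $\xi=v/u$ form a $\frac{1}{Nu}$-separated subset of $K(q,A)$, Ahlfors regularity (obtained from the WSP via \cite[Theorem 2.1]{FHOR2015}) bounds the number of such points by a constant multiple of $u^{s}$, and pigeonhole together with the contraction fixed-point argument produces the eventually periodic coding with the required period bound. The only cosmetic difference is your closing remark that $s<1$ enters through the Ahlfors regularity; in fact regularity holds regardless, and the hypothesis $s<1$ is only needed later in the proof of Theorem \ref{thm:equivalence-of-finite}.
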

\begin{proof}
Take a rational number $\xi=v/u \in K(q, A)$ with $u \in \N$ and $\gcd(v,u)=1$.
Let $(a_i)=a_1a_2\ldots\in A^\N$ be a coding of $\xi$, i.e., $\xi=\sum_{i=1}^{\f}a_i q^{-i}$. Consider the sequence $\{\xi_n\}_{n=0}^\f$ defined by
\begin{equation}\label{eq:sequence-xi}
\xi_0:=\xi,\quad \textrm{and} \quad\xi_n:=f_{a_n}^{-1}(\xi_{n-1})= q \xi_{n-1} - a_n \quad\forall n\in \N,
\end{equation}
where $f_a(x) = (x+a)/q$ for $a \in A$.
Then we have $\xi_n = \sum_{i=1}^{\f} a_{n+i} q^{-i} \in K(q,A)$ for all $n\in \N_0$.

Since $A\subset\mathbb Q$ is a finite set, choose $N\in \N$ such that $N a \in \Z$ for any $a \in A$. 
Note that $\xi_0 \in \Z/u$ and $a_n \in \Z/N$ for all $n \in \N$.
By (\ref{eq:sequence-xi}), we can recursively show each $\xi_n$ has the form
$$\xi_n=\frac{v_n}{uN} \quad\textrm{for some } v_n \in \mathbb Z. $$
Let $\Upsilon:=\{\xi_n:n\in \N_0 \}$. Then for any two distinct elements $\eta_1, \eta_2 \in \Upsilon$, we have
\begin{equation}\label{eq:gap}
|\eta_1-\eta_2 |\ge \frac{1}{uN}.
\end{equation}

Note by Lemma \ref{IFS-is-WSP} that the IFS ${\mathcal F}_{q, A}$ satisfies the WSP. Then by Fraser et al. \cite[Theorem 2.1]{FHOR2015}, the self-similar set $K(q, A)$ is Ahlfors regular. Thus there exists a constant $c\ge1$ such that for all $0< r < 1$ and all $x\in K(q,A)$,
\begin{equation}\label{eq:Ahlfors-regular}
c^{-1} r^s \le \mathcal{H}^{s}\big( K(q,A)\cap B(x,r) \big) \le cr^s,
\end{equation}
where $s = \dim_H K(q,A) < 1$.
By (\ref{eq:gap}), the balls $\big\{ B(\eta, \frac{1}{3uN}): \eta \in \Upsilon\big\}$ are pairwise disjoint.
Note that $\Upsilon \subset K(q,A)$.
It follows from (\ref{eq:Ahlfors-regular}) that
\begin{align*}
	\mathcal{H}^{s}\big( K(q,A) \big) &\ge \mathcal{H}^{s}\bigg( K(q,A)\cap \bigcup_{{\eta\in \Upsilon}} B\Big( \eta,\frac{1}{3uN} \Big) \bigg)\\
	&=\sum_{\eta\in \Upsilon}\mathcal{H}^{s}\bigg( K(q,A)\cap B\Big( \eta,\frac{1}{3uN} \Big)\bigg)\\
	&\ge c^{-1}\Big(\frac{1}{3uN}\Big)^{s}\#\Upsilon.
\end{align*}
Since the set $K(q,A)$ is compact, it can be covered by finitely many balls with radius $1/2$. By (\ref{eq:Ahlfors-regular}) we also have $\mathcal{H}^{s}\big( K(q,A) \big)<+\f$. 
Thus there exists a constant $c_1>0$ depending only on ${\mathcal F}_{q,A}$ such that
$$\#\Upsilon \le c_1u^{s}.$$
This implies that there are $0\le j < k \le c_1 u^{s}$ such that $\xi_k = \xi_j$. By (\ref{eq:sequence-xi}) it yields that
$$\xi_j =f_{a_{j+1}}\circ {f_{a_{j+2}}} \circ \cdots \circ f_{a_{k}} (\xi_j).$$
This means $\xi_j$ has a periodic coding $(a_{j+1}a_{j+2}\ldots a_{k})^{\f}$.
It follows that $\xi$ has a eventually periodic coding $a_1a_2\ldots a_j (a_{j+1}a_{j+2}\ldots a_{k})^{\f}$ with the period length $k-j \le c_1 u^{s}.$
\end{proof}

For $a \in \N$ and $p\in \N_{\ge 2}$ with $\gcd(a,p)=1$, the \emph{order of $a$ modula $p$}, denoted by $\mathrm{ord}_p(a)$, is defined to be the smallest positive integer $n$ such that $a^{n} \equiv 1 \pmod{p}$.
By \cite[Theorem 88]{Hardy2010}, we have $a^{n} \equiv 1 \pmod{p}$ if and only if $\mathrm{ord}_p(a) \mid n$.
The following lemma was proven by Bloshchitsyn \cite{Bloshchitsyn-2015}.
\begin{lemma}[{\cite[Lemma 3]{Bloshchitsyn-2015}}]
  \label{lem:ord-a-b}
  Let $q\in\N_{\ge 3}$ and let $p$ be a  prime number with $\gcd(p,q)=1$. Then there exist  $m\in\N$ and $d\in\N$  such that
  \begin{equation*}\label{lemma-of-Bloshchitsyn}
\mathrm{ord}_{p}(q) = \mathrm{ord}_{p^2}(q) =\ldots=\mathrm{ord}_{p^{m}}(q)=d,\quad\textrm{and}\quad \mathrm{ord}_{p^{m+n}}(q)= p^{n} d \quad \forall n\in\N.
\end{equation*}
\end{lemma}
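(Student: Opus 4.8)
Write $e_k:=\mathrm{ord}_{p^k}(q)$ for $k\in\N$, so that $d:=e_1=\mathrm{ord}_p(q)$, and let $v_p(\cdot)$ denote the $p$-adic valuation. The plan is to determine the whole sequence $(e_k)_{k\ge 1}$ term by term. First I would record two monotonicity facts. Since $q^n\equiv 1\pmod{p^{k+1}}$ forces $q^n\equiv 1\pmod{p^k}$, we get $e_k\mid e_{k+1}$. Conversely, writing $q^{e_k}=1+c\,p^k$ with $c\in\Z$ and expanding $(1+c\,p^k)^p=1+c\,p^{k+1}+\binom{p}{2}c^2 p^{2k}+\cdots$, every summand past the first two has $p$-adic valuation $\ge 2k\ge k+1$, so $q^{p\,e_k}\equiv 1\pmod{p^{k+1}}$ and hence $e_{k+1}\mid p\,e_k$. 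Combining, $e_{k+1}\in\{e_k,\,p\,e_k\}$ for every $k\in\N$. Call $k$ a \emph{jump index} if $e_{k+1}=p\,e_k$. The claim to prove is that the set of jump indices is a final segment $\{k\in\N:k\ge m\}$ of $\N$; granting this, the absence of jumps below $m$ gives $e_1=e_2=\cdots=e_m=d$, while telescoping the jumps from $m$ onward gives $e_{m+n}=p^n d$ for all $n\in\N$, which are precisely the two asserted identities. One moreover reads off $m=v_p(q^d-1)$, a positive integer because $p\mid q^d-1$ by the definition of $d$ while $q^d-1\ne 0$ since $q\ge 3$.

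The heart of the argument is the propagation step: \emph{if $k$ is a jump index, so is $k+1$}. If $e_{k+1}=p\,e_k$, then $q^{e_k}\not\equiv 1\pmod{p^{k+1}}$, so in $q^{e_k}=1+c\,p^k$ we have $p\nmid c$. Taking $p$ to be an odd prime, in the expansion
\[
q^{p\,e_k}=(1+c\,p^k)^p=1+c\,p^{k+1}+\binom{p}{2}c^2 p^{2k}+\binom{p}{3}c^3 p^{3k}+\cdots+c^p p^{pk},
\]
every term after $1+c\,p^{k+1}$ has $p$-adic valuation at least $k+2$: for $\binom{p}{2}c^2 p^{2k}$ one uses $p\mid\binom{p}{2}$ together with $2k\ge k+1$, and the remaining terms only raise the valuation further. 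Hence $v_p(q^{p\,e_k}-1)=k+1$ exactly, so $q^{e_{k+1}}=q^{p\,e_k}\not\equiv 1\pmod{p^{k+2}}$, which excludes $e_{k+2}=e_{k+1}$ and forces $e_{k+2}=p\,e_{k+1}$. Iterating, the set of jump indices is a final segment, possibly empty a priori; but it is nonempty, for if no jump ever occurred then $e_k\equiv d$, whence $q^d\equiv 1\pmod{p^k}$ for all $k$ and therefore $q^d=1$, contradicting $q\ge 3$. This establishes the lemma for every odd prime $p$. (Equivalently, one is proving the Lifting-the-Exponent identity $v_p(q^{dn}-1)=v_p(q^d-1)+v_p(n)$ for odd $p$, from which $\mathrm{ord}_{p^k}(q)=d\,p^{\max\{0,\,k-m\}}$ follows at once.)

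The step I expect to be the genuine obstacle is the prime $p=2$. There the critical term is $c^2\,2^{2k}$, of valuation $2k$, which strictly exceeds $k+1$ only for $k\ge 2$; at $k=1$ it equals $k+1$, so the propagation step can break down at $k=1$ --- indeed for $q=3$ one computes $(e_k)_{k\ge 1}=1,2,2,4,8,16,\dots$, whose set of jump indices $\{1,3,4,5,\dots\}$ is not a final segment. Thus the clean formulation above is really a statement about odd primes $p$, and the case $p=2$ has to be analysed directly from the non-cyclic structure $(\Z/2^k\Z)^\times\cong\Z/2\Z\times\Z/2^{k-2}\Z$ valid for $k\ge 3$. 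The odd-prime case, on the other hand, reduces to the short valuation computation indicated above.
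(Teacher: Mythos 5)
The paper offers no proof of this lemma at all --- it is simply quoted from Bloshchitsyn --- so there is no in-paper argument to compare yours against, and supplying a proof is a reasonable exercise. Judged on its own, your treatment of odd primes is correct and complete: the divisibilities $e_k\mid e_{k+1}\mid p\,e_k$, the exact-valuation computation showing that a jump at $k$ forces a jump at $k+1$ (valid because $p\mid\binom{p}{2}$ and $2k\ge k+1$, $3k\ge k+2$ for $k\ge1$), and the nonemptiness of the jump set together give exactly the stated dichotomy, with $m=v_p(q^d-1)$. This is the standard lifting-the-exponent argument, and for odd $p$ nothing more is needed.

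The genuine problem is $p=2$, and it is more serious than your closing paragraph admits. You present the even case as a step still ``to be analysed directly from the non-cyclic structure of $(\Z/2^k\Z)^\times$'', but your own computation already settles it negatively: the conclusion of the lemma is \emph{equivalent} to the jump set being a final segment $\{k\ge m\}$, and for $q=3$, $p=2$ you exhibit the jump set $\{1,3,4,5,\dots\}$. That is a counterexample to the lemma as printed (which assumes only that $p$ is prime and $\gcd(p,q)=1$), not a gap in your method; the same failure occurs for every $q\equiv 3\pmod 4$, since $\mathrm{ord}_2(q)=1$ and $\mathrm{ord}_4(q)=2$ while $\mathrm{ord}_8(q)=2$ because $q^2\equiv 1\pmod 8$ for all odd $q$, so no choice of $(m,d)$ can work. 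Hence no further analysis of $(\Z/2^k\Z)^\times$ will close your gap, and you should state the correct outcome explicitly: the lemma holds for all odd $p$ (and for $p=2$ exactly when $q\equiv 1\pmod 4$), while for $p=2$, $q\equiv 3\pmod 4$ it must be replaced by the weaker estimate $v_2(q^t-1)\le v_2(q^2-1)+v_2(t)$, which still yields $2^n\mid 2^{m}\,\mathrm{ord}_{2^n}(q)$ with $m=v_2(q^2-1)$. Since that divisibility is the only consequence of the present lemma invoked in the proof of Lemma \ref{lem:order-lower-bound}, the paper's subsequent results are unaffected, but as a proof of the statement as written your proposal is incomplete --- and indeed the statement itself needs the parity restriction.
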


\begin{lemma}\label{lem:order-lower-bound}
Let $q \in \mathbb N_{\ge 3}$ and let $p_1, p_2, \ldots, p_k$ be different   prime numbers such that $\gcd(p_i,q)=1$ for all $1\le i\le k$. Then there exists a constant $c_2>0$ such that
\[ \mathrm{ord}_{p_1^{n_1}p_2^{n_2}\ldots p_k^{n_k}}(q)\ge c_2 p_1^{n_1}p_2^{n_2}\ldots p_k^{n_k} \]
for all $(n_1, n_2, \ldots, n_k)\in\N_0^k\setminus\{\mathbf{0}\}$.
\end{lemma}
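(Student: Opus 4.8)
The plan is to bootstrap from the single‑prime case (Lemma \ref{lem:ord-a-b}) to the general product via the Chinese Remainder Theorem, in the form $\mathrm{ord}_{p_1^{n_1}\cdots p_k^{n_k}}(q) = \mathrm{lcm}\big( \mathrm{ord}_{p_1^{n_1}}(q), \ldots, \mathrm{ord}_{p_k^{n_k}}(q) \big)$, which holds because $q^n \equiv 1 \pmod{p_1^{n_1}\cdots p_k^{n_k}}$ if and only if $q^n \equiv 1 \pmod{p_i^{n_i}}$ for every $i$. First I would fix, for each $i$, the integers $m_i \in \N$ and $d_i \in \N$ supplied by Lemma \ref{lem:ord-a-b} applied to $q$ and the prime $p_i$, so that $\mathrm{ord}_{p_i^{n}}(q) = p_i^{\max\{n-m_i,0\}} d_i$ for all $n \in \N$ (writing the two cases of Lemma \ref{lem:ord-a-b} uniformly: it equals $d_i$ when $n \le m_i$ and $p_i^{n-m_i} d_i$ when $n > m_i$). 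In particular $\mathrm{ord}_{p_i^{n}}(q) \ge p_i^{n-m_i} \ge p_i^{-m_i}\, p_i^{n}$ for every $n \ge 1$.

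Next I would estimate the lcm from below. Since $\mathrm{lcm}(A_1,\ldots,A_k) \ge \max_i A_i$, and, more usefully, $\mathrm{lcm}(A_1,\ldots,A_k) \ge A_j$ for each fixed $j$, I can in particular pick an index $j$ achieving $p_j^{n_j} = \max_i p_i^{n_i}$; then
\[
\mathrm{ord}_{p_1^{n_1}\cdots p_k^{n_k}}(q) \ge \mathrm{ord}_{p_j^{n_j}}(q) \ge p_j^{-m_j}\, p_j^{n_j} \ge \Big( \min_{1\le i \le k} p_i^{-m_i} \Big) \max_{1\le i \le k} p_i^{n_i}.
\]
Finally, since there are only $k$ factors, $\max_i p_i^{n_i} \ge \big( \prod_i p_i^{n_i} \big)^{1/k}$ is too weak; instead I use that $\prod_i p_i^{n_i} \le \big( \max_i p_i^{n_i} \big)^{k}$ is also not directly what I want. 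The clean route is: the $p_i$ are coprime, so an even simpler bound works — write $P := p_1^{n_1}\cdots p_k^{n_k}$ and note $\max_i p_i^{n_i} \ge P^{1/k}$, giving $\mathrm{ord}_P(q) \ge c\, P^{1/k}$, which is weaker than claimed. To get the full linear bound $c_2 P$ I instead argue multiplicatively across all coordinates at once: using coprimality of the $p_i$ again,
\[
\mathrm{lcm}\big( p_1^{\max\{n_1-m_1,0\}} d_1, \ldots, p_k^{\max\{n_k-m_k,0\}} d_k \big) \ge \prod_{i=1}^{k} p_i^{\max\{n_i-m_i,0\}} \ge \prod_{i=1}^{k} p_i^{-m_i}\cdot \prod_{i=1}^{k} p_i^{n_i},
\]
where the first inequality holds because the prime powers $p_i^{\max\{n_i-m_i,0\}}$ are pairwise coprime (distinct primes) and hence their lcm is their product, and that product divides the lcm of the full list. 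Setting $c_2 := \prod_{i=1}^{k} p_i^{-m_i} > 0$ — a constant depending only on $q$ and $p_1,\ldots,p_k$ — yields $\mathrm{ord}_{P}(q) \ge c_2\, P$ for all $(n_1,\ldots,n_k) \in \N_0^k$, including the case where some $n_i = 0$ since then that factor contributes $p_i^0 = 1$ on both sides. This is valid provided $(n_1,\ldots,n_k)\neq \mathbf 0$ so that $P \ge 2$ and the order is well defined.

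I expect the only real point requiring care to be the uniform restatement of Lemma \ref{lem:ord-a-b} — packaging its two regimes ($n \le m_i$ versus $n > m_i$) into the single inequality $\mathrm{ord}_{p_i^n}(q) \ge p_i^{n-m_i}$ — together with the observation that the lcm of pairwise coprime numbers is their product, which is what converts the per‑coordinate bounds into the desired linear‑in‑$P$ bound. Everything else is bookkeeping with the Chinese Remainder Theorem and the divisibility criterion $a^n \equiv 1 \pmod p \iff \mathrm{ord}_p(a) \mid n$ recalled just before Lemma \ref{lem:ord-a-b}.
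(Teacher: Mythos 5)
Your proposal is correct and is essentially the paper's own argument: both proofs use Lemma \ref{lem:ord-a-b} to extract that $p_i^{\max\{n_i-m_i,0\}}$ divides $\mathrm{ord}_{p_1^{n_1}\cdots p_k^{n_k}}(q)$ for each $i$, multiply over the distinct (hence coprime) primes, and land on the same constant $c_2=\prod_{i=1}^k p_i^{-m_i}$. Your lcm/CRT packaging is only a cosmetic reformulation of the paper's use of the divisibility $\mathrm{ord}_{p_i^{n_i}}(q)\mid\mathrm{ord}_{p_1^{n_1}\cdots p_k^{n_k}}(q)$, so no substantive difference remains (the brief detour about the weaker $P^{1/k}$ bound can simply be deleted).
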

\begin{proof}
For each $1\le i \le k$, by Lemma \ref{lem:ord-a-b} there exist $m_i\in\N$ and $d_i\in\N$ such that
\[ \mathrm{ord}_{p_i}(q) = \mathrm{ord}_{p_i^2}(q) =\ldots=\mathrm{ord}_{p_i^{m_i}}(q)=d_i,\quad\text{and}\quad \mathrm{ord}_{p_i^{m_i+n}}(q)=p_i^{n} d_i \quad \forall n \in \N,\]
and thus,
\begin{equation}\label{eq:p-i}
  p_i^n ~\mid~ p_i^{m_i} \cdot \mathrm{ord}_{p_i^{n}}(q) \quad \forall n \in \N.
\end{equation}

Fix any $(n_1, n_2, \ldots, n_k)\in\N_0^k\setminus\{\mathbf{0}\}$.
Note by \cite[Theorem 88]{Hardy2010} that for each $1 \le i \le k$ we have
$$\mathrm{ord}_{p_i^{n_i}}(q) ~\mid~\mathrm{ord}_{p_1^{n_1}p_2^{n_2}\ldots p_k^{n_k}}(q).$$
For convenience, write $\mathrm{ord}_{1}(q)=1$.
By (\ref{eq:p-i}), we have $$p_i^{n_i}~\mid~ p_i^{m_i} \cdot \mathrm{ord}_{p_1^{n_1}p_2^{n_2}\ldots p_k^{n_k}}(q)\quad \forall 1 \le i \le k.$$
It follows that $$p_1^{n_1}p_2^{n_2}\ldots p_k^{n_k}~\mid~ p_1^{m_1} p_2^{m_2} \ldots p_k^{m_k} \cdot \mathrm{ord}_{p_1^{n_1}p_2^{n_2}\ldots p_k^{n_k}}(q).$$
This implies that $$\mathrm{ord}_{p_1^{n_1}p_2^{n_2}\ldots p_k^{n_k}}(q) \ge \frac{1}{p_1^{m_1} p_2^{m_2} \ldots p_k^{m_k}}p_1^{n_1}p_2^{n_2}\ldots p_k^{n_k}, $$
as desired.
\end{proof}

Based on Lemmas \ref{lem:order-upper-bound} and  \ref{lem:order-lower-bound}, we prove Theorem \ref{thm:equivalence-of-finite} now.

\begin{proof}[Proof of Theorem \ref{thm:equivalence-of-finite}]
It remains to show (i) implies (iii). Assume $s=\dim_H K(q,A) < 1$.

Let $p\in\N_{\ge 2}$ with $\gcd(p,q)=1$.
Recall from (\ref{Dp-definition}) that $D_p$ consists of all rational numbers with a finite $p$-ary expansions.
Let $p_1,p_2,\ldots,p_k$ be all distinct prime factors of $p$.
Write $\mathbf{p}=(p_1,p_2,\ldots,p_k)$. For $\mathbf{n}=(n_1,n_2,\ldots,n_k)\in\N_0^k$ we define
$$D_{\mathbf{p}}^{\mathbf{n}}:=\bigg\{ \frac{d}{p_1^{n_1}p_2^{n_2}\ldots p_{k}^{n_k} }:d\in \mathbb {Z}, ~\gcd(d,p_1^{n_1}p_2^{n_2}\ldots p_{k}^{n_k})=1 \bigg\}.$$
In particular, $D_{\mathbf{p}}^{\mathbf{0}} :=\mathbb Z$. Then $D_p$ can be written as disjoint unions of $D_{\mathbf{p}}^{\mathbf{n}}$, i.e.,
\[ D_p =\bigcup_{\mathbf{n} \in\N_0^k} D_{\mathbf{p}}^{\mathbf{n}}. \]
Note that each $D_{\mathbf{p}}^{\mathbf{n}}$ is uniformly discrete.
We have $\# \big( D_{\mathbf{p}}^{\mathbf{n}} \cap K(q,A) \big) < +\f$ for each $\mathbf{n}\in\N_0^k$.
In order to show $\# \big( D_p \cap K(q,A) \big) < +\f$, it suffices to show that there exists $h \in \N$ such that
\begin{equation}\label{eq:h-1}
  D_{\mathbf{p}}^{\mathbf{n}} \cap K(q,A) = \emptyset \quad \forall\mathbf{n}=(n_1,\ldots,n_k)\in\N_0^k \text{ with } n_1 + \ldots + n_k > h.
\end{equation}

Let $c_1,c_2 > 0$ be two constants as in Lemma \ref{lem:order-upper-bound} and Lemma \ref{lem:order-lower-bound}, respectively.
Since $A\subset\mathbb Q$ is a finite set, let $N\in \N$ satisfy $N a \in \Z$ for each $a \in A$.
Choose a large enough integer $h \in \N$ such that
\begin{equation}\label{eq:h-2}
  2^{h(1-s)} > \frac{c_1 N}{c_2}.
\end{equation}
In the following, we show the integer $h$ defined in (\ref{eq:h-2}) satisfies (\ref{eq:h-1}) by contradiction.

Suppose that there exists $\mathbf{n}=(n_1,n_2,\ldots,n_k)\in\N_0^k$ with $n_1 + n_2 + \ldots + n_k > h$ such that $D_{\mathbf{p}}^{\mathbf{n}} \cap K(q,A) \ne \emptyset$.
Take $$x=\frac{d}{p_1^{n_1}p_2^{n_2}\ldots p_{k}^{n_k}} \in D_{\mathbf{p}}^{\mathbf{n}} \cap K(q,A),$$ where $d\in\mathbb Z$ and $\gcd(d, p_1^{n_1}p_2^{n_2}\ldots p^{n_k}_k)=1$.
By Lemma \ref{lem:order-upper-bound}, the rational number $x$ has an eventually periodic coding  $a_1a_2\ldots a_j(a_{j+1} a_{j+2} \ldots a_{j+n})^\f\in A^\N$ with
 \begin{equation}\label{eq:upper-bound-k}
   n\le c_1(p_1^{n_1}p_2^{n_2}\ldots p_k^{n_k})^s.
 \end{equation}

Note that
\begin{align*}
\frac{d}{p_1^{n_1}p_2^{n_2}\ldots p_{k}^{n_k}}=x&=\sum_{i=1}^{j} \frac{a_i}{q^i} +\left(1+\frac{1}{q^{n}} +\frac{1}{q^{2n}}+\ldots  \right)\sum_{i=j+1}^{j+n} \frac{a_i}{q^i}\\
&=\sum_{i=1}^{j} \frac{a_i}{q^i} +\frac{q^n}{q^n-1 }\sum_{i=j+1}^{j+n} \frac{a_i}{q^i},
\end{align*}
and all $a_i \in \Z/N$.
Then we have
\[ \frac{d}{p_1^{n_1}p_2^{n_2}\ldots p_{k}^{n_k}} \in \frac{\mathbb Z}{Nq^j(q^{n}-1)}.\]
Note that $\gcd(d,p_1^{n_1}p_2^{n_2}\ldots p_k^{n_k})=1$ and $\gcd(p,q)=1$. It follows  that \[ p_1^{n_1}p_2^{n_2}\ldots p_k^{n_k} \mid N(q^n -1).\]
Write $\gcd(N,p_1^{n_1}p_2^{n_2}\ldots p_k^{n_k})=p_1^{m_1}p_2^{m_2}\ldots p_k^{m_k}$ with $(m_1,m_2,\ldots,m_k) \in \N_0^k$.
Then we obtain \[ p_1^{n_1-m_1}p_2^{n_2-m_2}\ldots p_k^{n_k-m_k} ~\mid~ q^n -1, \]
that is, \[ q^n \equiv 1 \pmod{p_1^{n_1-m_1}p_2^{n_2-m_2}\ldots p_k^{n_k-m_k}}. \]
By \cite[Theorem 88]{Hardy2010}, we have $$\mathrm{ord}_{p_1^{n_1-m_1}p_2^{n_2-m_2}\ldots p_k^{n_k-m_k}}(q) ~\mid~ n. $$
By Lemma \ref{lem:order-lower-bound}, we get
\begin{equation*}
  n\ge \mathrm{ord}_{p_1^{n_1-m_1}p_2^{n_2-m_2}\ldots p_k^{n_k-m_k}}(q) \geq c_2 p_1^{n_1-m_1}p_2^{n_2-m_2}\ldots p_k^{n_k-m_k}\ge \frac{c_2}{N} p_1^{n_1} p_2^{n_2} \ldots p_k^{n_k},
\end{equation*}
where the last inequality follows from $p_1^{m_1}p_2^{m_2}\ldots p_k^{m_k} \mid N$.
Together with (\ref{eq:upper-bound-k}), we conclude that $$(p_1^{n_1} p_2^{n_2} \ldots p_k^{n_k})^{1-s} \le \frac{c_1 N}{c_2}.$$
Note that all $p_j \ge 2$ and $n_1 +n_2 + \ldots + n_k>h$.
Thus we have $$2^{h(1-s)} < (p_1^{n_1} p_2^{n_2} \ldots p_k^{n_k})^{1-s} \le \frac{c_1 N}{c_2},$$
which contradicts (\ref{eq:h-2}).
This completes the proof.
\end{proof}

At the end of this section, we apply Theorem \ref{thm:equivalence-of-finite} to prove Theorem \ref{thm:uniform-bound}.

\begin{proof}[Proof of Theorem \ref{thm:uniform-bound}]
  For $n \in \N$, let $Q_n$ be the set of all rational numbers with denominator $\le n$.
  Since $\#(A-A) < q$, we clearly have $$\dim_H K(q,A-A) \le \frac{\log \#(A-A)}{\log q} < 1.$$
  By Theorem \ref{thm:equivalence-of-finite}, we have $\#\big( D_p \cap K(q,A-A) \big) < +\f$.
  Thus, there exists $N \in \N$ such that $$D_p \cap K(q,A-A) \subset Q_N.$$
  We claim that for any $\alpha \in \R$, $$\# \big( (D_p + \alpha) \cap K(q,A) \big) \le N \cdot \mathrm{diam}~ K(q,A) + 1. $$

  Suppose on the contrary that there exists $\alpha_0 \in \R$, $$\# \big( (D_p + \alpha_0) \cap K(q,A) \big) > N \cdot \mathrm{diam}~ K(q,A) + 1. $$
  Then we can find $x_1, x_2, \ldots, x_k \in D_p$ {with $k > N \cdot \mathrm{diam}~ K(q,A) + 1$} such that $$x_i + \alpha_0 \in K(q,A) \quad \forall 1 \le i \le k.$$
  We can assume that $x_1 < x_2 < \ldots < x_k$.
  For any $1 \le i < k$, we have
  $$x_{i+1}-x_i = (x_{i+1}+\alpha_0) - (x_i + \alpha_0) \in K(q,A) - K(q,A) = K(q,A-A),$$
  and in the meantime, $$x_{i+1}-x_i \in D_p - D_p = D_p.$$
  Thus we obtain $$x_{i+1}-x_i \in  D_p \cap K(q,A-A) \subset Q_N \quad \forall 1 \le i < k.$$
  This implies that $$x_{i+1}-x_i \ge \frac{1}{N} \quad \forall 1 \le i < k. $$
  Note that $k > N \cdot \mathrm{diam}~ K(q,A) + 1$. It follows that
  \begin{align*}
    \mathrm{diam}~ K(q,A)  \ge (x_k + \alpha_0) - (x_1+\alpha_0)
     = \sum_{i=1}^{k -1} (x_{i+1} - x_i)
     \ge \frac{k-1}{N}
     > \mathrm{diam}~ K(q,A),
  \end{align*}
  a contradiction.
  This completes the proof.
\end{proof}

\section{Spectral eigenvalues of self-similar spectral measures}\label{sec:spectral-eigenvalue}

In this section we study the spectral eigenvalue problem.
By applying Theorem \ref{thm:equivalence-of-finite}, we give a class of spectral eigenvalues of self-similar spectral measures, and prove Theorem \ref{thm:spectral-eigenvalue}.

For a Borel probability measure $\mu$ on $\R$, the Fourier transform is defined by
\[ \widehat{\mu}(\xi) = \int_\R e^{-2\pi \mathrm{i} \xi x} \mathrm{d}\mu(x),\; \xi \in \R. \]
The following criterion is frequently used for spectral measures. 
\begin{theorem}[{\cite[Theorem 2.6]{Li-Miao-Wang-2024}}]
  \label{thm:spectral-measure}
  Let $\mu$ be a Borel probability measure on $\R$ and $\Lambda \subset \R$.
  Then the set $\Lambda$ is a spectrum of $\mu$ if and only if \[\sum_{\lambda \in \Lambda} \big| \widehat{\mu}(\xi+\lambda) \big|^2 = 1 \quad \forall \xi \in \R.\]
\end{theorem}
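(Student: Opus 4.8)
The statement is the classical Jorgensen--Pedersen criterion, so the plan is to give a short self-contained argument based on Bessel's inequality and the injectivity of the Fourier transform on finite measures. Throughout I would write $e_\lambda(x)=e^{2\pi\mathrm{i}\lambda x}$ and $Q_\Lambda(\xi)=\sum_{\lambda\in\Lambda}\big|\widehat\mu(\xi+\lambda)\big|^2$. The one computation used repeatedly is that for $a,b\in\R$ one has $\langle e_a,e_b\rangle_{L^2(\mu)}=\int_\R e^{-2\pi\mathrm{i}(b-a)x}\,\mathrm{d}\mu(x)=\widehat\mu(b-a)$; in particular $\|e_a\|_{L^2(\mu)}^2=\widehat\mu(0)=1$, and $Q_\Lambda(-\eta)=\sum_{\lambda\in\Lambda}|\langle e_\eta,e_\lambda\rangle|^2$ for every $\eta\in\R$. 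I would also record that $\overline{\widehat\mu(\xi)}=\widehat\mu(-\xi)$ since $\mu$ is a positive measure.

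For the necessity direction, suppose $E(\Lambda)$ is an orthonormal basis of $L^2(\mu)$. Applying Parseval's identity to $f=e_{-\xi}$ gives $\sum_{\lambda\in\Lambda}|\langle e_{-\xi},e_\lambda\rangle|^2=\|e_{-\xi}\|^2=1$; since $\langle e_{-\xi},e_\lambda\rangle=\widehat\mu(\xi+\lambda)$, this is precisely $Q_\Lambda(\xi)=1$, and $\xi\in\R$ was arbitrary.

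For the sufficiency direction, assume $Q_\Lambda\equiv1$ (so in particular $\Lambda\neq\emptyset$). First I would check that $E(\Lambda)$ is an orthonormal system: fixing $\lambda_0\in\Lambda$ and noting that $Q_{\Lambda-\lambda_0}(\xi)=Q_\Lambda(\xi-\lambda_0)\equiv1$ as well, evaluation at $\xi=0$ gives $1=|\widehat\mu(0)|^2+\sum_{\lambda\in\Lambda\setminus\{\lambda_0\}}|\widehat\mu(\lambda-\lambda_0)|^2$, which forces $\widehat\mu(\lambda-\lambda_0)=0$ and hence $\langle e_\lambda,e_{\lambda_0}\rangle=\widehat\mu(\lambda_0-\lambda)=\overline{\widehat\mu(\lambda-\lambda_0)}=0$ for all $\lambda\neq\lambda_0$; together with $\|e_{\lambda_0}\|=1$ this is orthonormality. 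Next, for any $\eta\in\R$ we have $\sum_{\lambda\in\Lambda}|\langle e_\eta,e_\lambda\rangle|^2=Q_\Lambda(-\eta)=1=\|e_\eta\|^2$, so Bessel's inequality is an equality for $e_\eta$; since $E(\Lambda)$ is orthonormal, equality in Bessel forces $e_\eta\in\overline{\operatorname{span}}\,E(\Lambda)$. Finally I would use that $\operatorname{span}\{e_\eta:\eta\in\R\}$ is dense in $L^2(\mu)$: if $f\in L^2(\mu)\subseteq L^1(\mu)$ satisfies $\langle f,e_\eta\rangle=0$ for all $\eta\in\R$, then the finite complex measure $f\,\mathrm{d}\mu$ has identically zero Fourier transform, hence vanishes, so $f=0$ $\mu$-a.e. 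Combining the last two points yields $\overline{\operatorname{span}}\,E(\Lambda)=L^2(\mu)$, and a complete orthonormal system is an orthonormal basis, i.e.\ $\Lambda$ is a spectrum of $\mu$.

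There is no serious obstacle here; the only step needing a little care is the density of $\{e_\eta:\eta\in\R\}$ in $L^2(\mu)$, since $\mu$ is not assumed compactly supported and one cannot invoke Stone--Weierstrass on $C(\operatorname{supp}\mu)$ — instead one uses the injectivity of the Fourier transform on finite Borel measures. It is also worth noting that every series over $\Lambda$ occurring above has nonnegative terms, so no rearrangement issue arises, and that orthonormality of $E(\Lambda)$ automatically makes $\Lambda$ countable because $L^2(\mu)$ is separable.
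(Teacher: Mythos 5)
Your proof is correct. Note that the paper does not prove this statement at all: it is quoted as a known criterion from Li--Miao--Wang (and goes back to Jorgensen--Pedersen), so there is no in-paper argument to compare against; your self-contained proof is the standard one. The two points that usually need care are handled properly: orthonormality is extracted from $Q_\Lambda(-\lambda_0)=1$ together with $\widehat\mu(0)=1$, and completeness follows from equality in Bessel's inequality for every $e_\eta$ plus density of $\operatorname{span}\{e_\eta:\eta\in\R\}$ in $L^2(\mu)$, which you correctly justify via the uniqueness theorem for Fourier transforms of finite complex measures (since $\mu$ need not have compact support, Stone--Weierstrass alone would not suffice).
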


Using the criterion in Theorem \ref{thm:spectral-measure}, one can easily check the following lemma.
\begin{lemma}\label{lemma:spectrum}
Let $\Lambda$ be a spectrum of $\mu$. Then we have
\begin{itemize}
  \item[{\rm(i)}] $\Lambda+t$ is a spectrum of $\mu$ for any $t \in \R$;
  \item[{\rm(ii)}] $-\Lambda$ is a spectrum of $\mu$;
  \item[{\rm(iii)}] $\Lambda$ is a spectrum of $\mu\circ T_a$, where $T_a(x) = x + a$ and $a \in \R$.
\end{itemize}
\end{lemma}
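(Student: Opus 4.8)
The plan is to deduce all three statements directly from the spectral criterion of Theorem~\ref{thm:spectral-measure}: a set $\Lambda\subset\R$ is a spectrum of a Borel probability measure $\mu$ on $\R$ if and only if $Q_\mu(\xi,\Lambda):=\sum_{\lambda\in\Lambda}\big|\widehat\mu(\xi+\lambda)\big|^2=1$ for every $\xi\in\R$. The whole proof amounts to rewriting this sum after an appropriate change of variable, together with the two elementary facts that $\big|\widehat\mu(-\eta)\big|=\big|\widehat\mu(\eta)\big|$ (because $\mu$ is a positive, hence real, measure, so that $\widehat\mu(-\eta)=\overline{\widehat\mu(\eta)}$) and that translating $\mu$ multiplies $\widehat\mu$ by a unimodular character.

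For (i), fix $t\in\R$ and apply the hypothesis $Q_\mu(\,\cdot\,,\Lambda)\equiv1$ at the point $\xi+t$; this gives $\sum_{\lambda\in\Lambda}\big|\widehat\mu(\xi+(\lambda+t))\big|^2=Q_\mu(\xi+t,\Lambda)=1$ for all $\xi\in\R$, i.e.\ $Q_\mu(\,\cdot\,,\Lambda+t)\equiv1$, so that $\Lambda+t$ is a spectrum of $\mu$ by Theorem~\ref{thm:spectral-measure}. For (ii), using $\big|\widehat\mu(-\eta)\big|=\big|\widehat\mu(\eta)\big|$ one has, for every $\xi\in\R$, $\sum_{\lambda\in\Lambda}\big|\widehat\mu(\xi-\lambda)\big|^2=\sum_{\lambda\in\Lambda}\big|\widehat\mu\big((-\xi)+\lambda\big)\big|^2=Q_\mu(-\xi,\Lambda)=1$, which is exactly $Q_\mu(\,\cdot\,,-\Lambda)\equiv1$, so that $-\Lambda$ is a spectrum of $\mu$. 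For (iii), I would first record, via the change-of-variables formula, that $\int_\R f\,\mathrm{d}(\mu\circ T_a)=\int_\R f(x-a)\,\mathrm{d}\mu(x)$, whence $\widehat{\mu\circ T_a}(\xi)=e^{2\pi\mathrm{i} a\xi}\,\widehat\mu(\xi)$ and in particular $\big|\widehat{\mu\circ T_a}(\xi)\big|=\big|\widehat\mu(\xi)\big|$ for all $\xi\in\R$; then $Q_{\mu\circ T_a}(\xi,\Lambda)=Q_\mu(\xi,\Lambda)=1$ for every $\xi\in\R$, and a final appeal to Theorem~\ref{thm:spectral-measure} yields that $\Lambda$ is a spectrum of $\mu\circ T_a$.

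There is no genuine obstacle in this argument; it is a direct unwinding of definitions. The only point that calls for a little care is fixing the sign and orientation conventions consistently --- in particular the meaning of $\mu\circ T_a$ as a measure, and the resulting translation formula for $\widehat{\mu\circ T_a}$ --- but since only the modulus $\big|\widehat{\mu\circ T_a}\big|=\big|\widehat\mu\big|$ enters the criterion, any consistent convention leads to the same conclusion.
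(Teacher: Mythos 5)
Your proof is correct and follows exactly the route the paper intends: the paper gives no written proof of Lemma~\ref{lemma:spectrum}, merely asserting that it "can easily be checked" from the criterion in Theorem~\ref{thm:spectral-measure}, and your argument supplies precisely those routine verifications (translation of the summation variable, $|\widehat\mu(-\eta)|=|\widehat\mu(\eta)|$, and $|\widehat{\mu\circ T_a}|=|\widehat\mu|$). Nothing further is needed.
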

By Lemma \ref{lemma:spectrum} (i) and (ii), we always assume that $0 \in \Lambda$, and consider only positive spectral eigenvalues.

For a finite set $F \subset \R$, define the discrete measure $\delta_F := \frac{1}{\#F} \sum_{a \in F} \delta_a$, where $\delta_a$ is the Dirac measure.
Let $N \in \N_{\ge 2}$ and let $B,L \subset \Z $ be finite sets with $\# B = \#L \ge 2$.
For $\ell\ne \ell' \in L$, we have \[ \big\langle e_{\ell}(x), e_{\ell'}(x) \big\rangle_{L^2(\delta_{N^{-1} B})} = \int_{\R} e^{2\pi \mathrm{i} (\ell-\ell')x} \mathrm{d} \delta_{N^{-1} B}(x) = \frac{1}{\#B}\sum_{b \in B} e^{2\pi \mathrm{i} \frac{b(\ell-\ell')}{N}}. \]
The orthogonality of matrix in (\ref{eq:matrix}) is equivalent to that $\{e_\ell(x): \ell \in L\}$ is orthogonal in $L^2(\delta_{N^{-1} B})$.
Note that the dimension of $L^2(\delta_{N^{-1} B})$ is equal to $\#B = \#\{e_\ell(x): \ell \in L\}$.
Thus, we conclude that $(N,B,L)$ is a Hadamard triple if and only if $L$ is a spectrum of $\delta_{N^{-1} B}$.
By Lemma \ref{lemma:spectrum} (i) and (iii), we immediately obtain the following lemma.
\begin{lemma}\label{lemma:Hadamard-triple}
  Let $(N,B,L)$ be a Hadamard triple in $\R$.
  Then $(N,B+b_0,L+\ell_0)$ is also a Hadamard triple for any $b_0,\ell_0 \in \Z$.
\end{lemma}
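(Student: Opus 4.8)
The plan is to use the reformulation established just above Lemma~\ref{lemma:Hadamard-triple}, namely that $(N,B,L)$ is a Hadamard triple in $\R$ if and only if $L$ is a spectrum of the discrete measure $\delta_{N^{-1}B}$, and then to transport this property across the translations $B\mapsto B+b_0$ and $L\mapsto L+\ell_0$ using the stability properties in Lemma~\ref{lemma:spectrum}. First I would record the bookkeeping identity relating a translate of the support to a composition with a shift: for a finite set $F\subset\R$ and $a\in\R$ one has $\delta_F\circ T_a=\delta_{F-a}$, where $T_a(x)=x+a$. Applying this with $F=N^{-1}B$ and $a=-b_0/N$ gives
\[ \delta_{N^{-1}(B+b_0)} = \delta_{N^{-1}B+b_0/N} = \delta_{N^{-1}B}\circ T_{-b_0/N}. \]

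Next, starting from the hypothesis that $L$ is a spectrum of $\delta_{N^{-1}B}$, Lemma~\ref{lemma:spectrum}(iii) yields that $L$ is also a spectrum of $\delta_{N^{-1}B}\circ T_{-b_0/N}=\delta_{N^{-1}(B+b_0)}$, and then Lemma~\ref{lemma:spectrum}(i) upgrades this to: $L+\ell_0$ is a spectrum of $\delta_{N^{-1}(B+b_0)}$ for every $\ell_0\in\R$. Since $b_0,\ell_0\in\Z$, both $B+b_0$ and $L+\ell_0$ remain subsets of $\Z$ with the same cardinalities as $B$ and $L$, so the reformulation above gives that $(N,B+b_0,L+\ell_0)$ is a Hadamard triple, which is the claim.

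There is no serious obstacle here; the only point requiring care is the sign/scaling convention in the identity $\delta_F\circ T_a=\delta_{F-a}$ together with the factor $N^{-1}$ in front of the support. As an alternative that avoids measure-theoretic bookkeeping altogether, one can argue directly at the level of the matrix~(\ref{eq:matrix}): writing $M'$ for the matrix attached to $(N,B+b_0,L+\ell_0)$, indexed over $b\in B$ and $\ell\in L$ with entry $\tfrac{1}{\sqrt{\#B}}e^{2\pi\mathrm{i}(b+b_0)(\ell+\ell_0)/N}$, the expansion $(b+b_0)(\ell+\ell_0)=b\ell+\ell_0 b+b_0\ell+b_0\ell_0$ shows $M'=e^{2\pi\mathrm{i} b_0\ell_0/N}\,D_B\,M\,D_L$, where $D_B=\diag\big(e^{2\pi\mathrm{i}\ell_0 b/N}\big)_{b\in B}$ and $D_L=\diag\big(e^{2\pi\mathrm{i} b_0\ell/N}\big)_{\ell\in L}$ are unitary diagonal matrices and the scalar prefactor is unimodular; hence $M'^{*}M'=D_L^{*}M^{*}M D_L=I$ whenever $M^{*}M=I$, and conversely.
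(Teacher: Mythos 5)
Your main argument is correct and is essentially the paper's own route: the paper obtains this lemma immediately from the observation that $(N,B,L)$ is a Hadamard triple if and only if $L$ is a spectrum of $\delta_{N^{-1}B}$, combined with Lemma~\ref{lemma:spectrum} (i) and (iii), exactly as you do (and your bookkeeping identity $\delta_F\circ T_a=\delta_{F-a}$ is right, the sign being immaterial since translating the measure only changes $\widehat{\mu}$ by a unimodular factor). Your alternative direct computation $M'=e^{2\pi\mathrm{i} b_0\ell_0/N}D_B M D_L$ with unitary diagonal $D_B,D_L$ is also valid and gives a self-contained proof avoiding the spectral reformulation.
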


The following lemma can be viewed to {provide} a class of spectral eigenvalues of discrete spectral measure.

\begin{lemma}\label{lemma:multiply-p}
  Let $(N,B,L)$ be a Hadamard triple in $\R$.
  If $p \in \N_{\ge 2}$ with $\gcd(p,N)=1$, then $(N,B,pL)$ is also a Hadamard triple.
\end{lemma}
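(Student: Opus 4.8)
The plan is to reformulate the Hadamard condition as a single algebraic identity over the cyclotomic field $\Q(\omega)$ with $\omega:=e^{2\pi\mathrm{i}/N}$, and then transport that identity along a Galois automorphism. First I would unwind the unitarity $M^*M=I$ of the matrix in (\ref{eq:matrix}): clearing the normalisation, $(N,B,L)$ is a Hadamard triple precisely when
\[
\sum_{b\in B}\omega^{b(\ell-\ell')}=(\#B)\,\delta_{\ell,\ell'}\qquad(\forall\,\ell,\ell'\in L),
\]
where $\delta_{\ell,\ell'}$ is the Kronecker symbol; note this already forces the elements of $L$ to be pairwise distinct modulo $N$. Since multiplication by $p\ne 0$ is injective on $\Z$, we have $\#(pL)=\#L=\#B\ge 2$, so it suffices to prove the same identity with each $\ell$ replaced by $p\ell$, i.e. $\sum_{b\in B}\omega^{b(p\ell-p\ell')}=(\#B)\,\delta_{\ell,\ell'}$ for all $\ell,\ell'\in L$.

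The crux is that $\gcd(p,N)=1$ makes $\omega\mapsto\omega^p$ extend to a field automorphism $\sigma$ of $\Q(\omega)$ — an element of $\mathrm{Gal}(\Q(\omega)/\Q)\cong(\Z/N\Z)^\times$ — which therefore fixes $\Q$ pointwise and satisfies $\sigma(\omega^m)=\omega^{pm}$ for all $m\in\Z$. Applying $\sigma$ to the displayed identity (valid by hypothesis), and using that $\#B$ and $0$ are rational and hence fixed, yields exactly $\sum_{b\in B}\omega^{b(p\ell-p\ell')}=(\#B)\,\delta_{\ell,\ell'}$, so $(N,B,pL)$ is a Hadamard triple. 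The only mild subtlety I anticipate is that one should resist transferring the orthogonality relations one pair at a time: the relevant differences $p(\ell-\ell')$ need not be congruent modulo $N$ to any difference of elements of $L$, so there is no pairing of the relations for $L$ with those for $pL$. Transporting the whole polynomial identity at once through $\sigma$ is what sidesteps this.

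If one prefers to avoid Galois theory, the same step runs with cyclotomic polynomials: for $\ell\ne\ell'$ the hypothesis says that $z:=\omega^{\ell-\ell'}$, a primitive $d$-th root of unity for $d:=N/\gcd(\ell-\ell',N)$, is a root of $x^{-\min B}\sum_{b\in B}x^b\in\Z[x]$; hence its minimal polynomial $\Phi_d$ over $\Q$ divides this polynomial, and since $\gcd(p,d)=1$ the number $z^p$ is again a primitive $d$-th root of unity and thus also a root, giving $\sum_{b\in B}\omega^{bp(\ell-\ell')}=0$. One could also phrase everything through Theorem \ref{thm:spectral-measure} together with the identification (recorded just before this lemma) of Hadamard triples with spectra of $\delta_{N^{-1}B}$, but checking that $pL$ is again a spectrum reduces to the same cyclotomic input.
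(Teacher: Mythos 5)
Your proof is correct and rests on essentially the same idea as the paper's: the paper fixes $\ell\ne\ell'$, forms $P(x)=\sum_{b\in B}x^{b(\ell-\ell')}\in\Z[x]$ (after translating $B$ into $\N$ via Lemma \ref{lemma:Hadamard-triple}), notes $\Phi_N(x)\mid P(x)$, and evaluates at $e^{2\pi \mathrm{i}p/N}$, which is exactly the conjugation-of-roots-of-unity input behind your Galois automorphism $\sigma:\omega\mapsto\omega^p$ (and your cyclotomic fallback with $\Phi_d$). The only difference is cosmetic: working with $\sigma$ on $\Q(\omega)$ handles negative exponents directly, so you avoid the preliminary normalisation $B\subset\N$.
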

\begin{proof}
  By Lemma \ref{lemma:Hadamard-triple}, we can assume that $B \subset \N$.
  It suffices to show that $\{e_{p\ell}(x): \ell \in L\}$ is orthogonal in $L^2(\delta_{N^{-1} B})$, i.e.,
  \[ \sum_{b \in B} e^{2\pi \mathrm{i} \frac{pb(\ell-\ell')}{N}} =0 \quad \forall \ell\ne \ell' \in L.\]

  Fix $\ell,\ell' \in L$ with $\ell> \ell'$. Write $P(x) = \sum_{b \in B} x^{b(\ell-\ell')}$.
  Since $(N,B,L)$ is a Hadamard triple, we have $$P\big( e^{\frac{2\pi \mathrm{i}}{N}}  \big) = \sum_{b \in B} e^{2\pi \mathrm{i} \frac{b(\ell-\ell')}{N}} =0. $$
  Let $\Phi_N(x)$ be the $N$-th cyclotomic polynomial, i.e., the monic minimal polynomial of $e^{\frac{2\pi \mathrm{i}}{N}}$. Since $P(x) \in \Z[x]$, we obtain $\Phi_N(x) \mid P(x)$ in $\Z[x]$.
  Note that $$\Phi_N(x) = \prod_{1\le k \le N, \gcd(k,N)=1} \big( x - e^\frac{2\pi k \mathrm{i}}{N} \big).$$
  Since $\gcd(p,N)=1$, we have $\Phi_N\big( e^{\frac{2\pi p\mathrm{i} }{N}} \big)=0$.
  It follows that $$P\big( e^{\frac{2\pi \mathrm{i} p}{N}} \big)= \sum_{b \in B} e^{2\pi \mathrm{i} \frac{pb(\ell-\ell')}{N}} =0,$$
  as desired.
\end{proof}

Let $(N,B,L)$ be a Hadamard triple in $\R$ with $0 \in B \cap L$.
Define $$m_B(x) := \frac{1}{\# B} \sum_{b \in B} e^{2\pi \mathrm{i} b x}.$$
A finite set $C=\{x_1,x_2,\ldots, x_k\}$ is called a \emph{cycle} for the IFS $\mathcal{F}_{N,L}=\big\{ \tau_\ell(x) = (x+\ell)/N: \ell \in L \big\}$ if there exist $\ell_1,\ell_2,\ldots \ell_k \in L$ such that \[ \tau_{\ell_i}(x_i) = x_{i+1} \text{ for } i=1,2,\ldots, k-1, \text{ and } \tau_{\ell_k}(x_k) = x_1.  \]
The cycle $C$ is called an \emph{$m_B$-cycle} if $|m_B(x_i)|=1$ for all $1\le i \le k$.
Note that $\{0\}$ is a trivial $m_B$-cycle.
{Dutkay and Jorgensen \cite{Dutkay-Jorgensen-2008} gave the following construction of spectrum for self-similar spectral measures. }

\begin{theorem}[{\cite[Theorem 2.38]{Dutkay-Jorgensen-2008}}]
  \label{thm:spectrum}
  Let $(N,B,L)$ be a Hadamard triple in $\R$ with $0\in B \cap L$.
  Let $\Lambda$ be the smallest set that contains $-C$ for all $m_B$-cycles $C$, and which satisfies $N \Lambda + L \subset \Lambda$.
  Then the set $\Lambda$ is a spectrum of the self-similar measure $\mu_{N,B}$.
\end{theorem}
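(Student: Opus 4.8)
The plan is to verify the spectral criterion of Theorem \ref{thm:spectral-measure}: it suffices to show that the function $Q_\Lambda(\xi):=\sum_{\lambda\in\Lambda}|\widehat{\mu_{N,B}}(\xi+\lambda)|^2$ equals $1$ for every $\xi\in\R$. First I would record the basic identities. Self-similarity gives $\widehat{\mu_{N,B}}(\xi)=m_B(-\xi/N)\,\widehat{\mu_{N,B}}(\xi/N)$, hence $|\widehat{\mu_{N,B}}(\xi)|^2=\prod_{n\ge1}|m_B(\xi/N^n)|^2$; moreover $m_B$ is $\Z$-periodic, and the unitarity of the matrix $M$ in \eqref{eq:matrix} yields the partition of unity $\sum_{\ell\in L}|m_B((\xi+\ell)/N)|^2=1$ for all $\xi$. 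I would also note that the construction produces the exact self-replication $\Lambda=N\Lambda+L$ (each set $-C$ is a periodic orbit of the maps $\lambda\mapsto N\lambda+\ell$), that $\{0\}$ is the trivial $m_B$-cycle so that $0\in\Lambda$ with $\widehat{\mu_{N,B}}(0)=1$, and that all cycle points are rationals whose denominators divide some $N^k-1$ and hence are coprime to $N$; this makes the decomposition $\Lambda=\bigsqcup_{\ell\in L}(N\Lambda+\ell)$ disjoint.

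Second, orthonormality, equivalently $Q_\Lambda\le 1$. Writing each $\lambda\in\Lambda$ with an address $\lambda=N^n c+\sum_{j=1}^n N^{n-j}\eta_j$, where $c$ is a cycle point and $\eta_j\in L$, I would, for distinct $\lambda,\lambda'$, locate a scale at which the product $\prod_{n\ge1}m_B(-(\lambda'-\lambda)/N^n)=\widehat{\mu_{N,B}}(\lambda'-\lambda)$ picks up a vanishing factor: the Hadamard relation $m_B((\ell-\ell')/N)=0$ for $\ell\ne\ell'$ handles the $L$-part, while $|m_B|\equiv1$ along an $m_B$-cycle forces the tails to align for the cycle part. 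This gives $\widehat{\mu_{N,B}}(\lambda-\lambda')=0$, so $\{e_\lambda\}$ is orthonormal in $L^2(\mu_{N,B})$ and $Q_\Lambda\le1$ follows from Bessel's inequality.

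Third, and hardest, completeness, that is $Q_\Lambda\equiv1$. The function $Q_\Lambda$ is entire, real-analytic and satisfies $0\le Q_\Lambda\le1$. Using self-similarity together with the disjointness of $\Lambda=\bigsqcup_{\ell}(N\Lambda+\ell)$ I would derive the transfer (Ruelle) relation $Q_\Lambda=RQ_\Lambda$, where $(Rf)(\xi)=\sum_{\ell\in L}|m_B((\xi+\ell)/N)|^2 f((\xi+\ell)/N)$ and $R\mathbf{1}=\mathbf{1}$ by the partition of unity. The delicate point is that cycle points are non-integer, so the periodicity $m_B((\xi+\ell)/N+\gamma)=m_B((\xi+\ell)/N)$ needed to collapse the double sum is valid only after restricting to the compact $\mathcal{F}_{N,L}$-invariant set $X$ generated by the cycle points; on $X$ the identity $Q_\Lambda=RQ_\Lambda$ holds exactly. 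I would then analyze nonnegative fixed points of $R$ on $C(X)$: since $R$ is positive and averaging, the minimum of $1-Q_\Lambda$ propagates under backward iteration and must be attained on the extreme set $\{|m_B|=1\}$, whose periodic orbits are precisely the $m_B$-cycles. As $-C\subset\Lambda$ for every $m_B$-cycle $C$, the bound $Q_\Lambda\ge|\widehat{\mu_{N,B}}(\xi)|^2$ (the $\lambda=0$ term) forces $Q_\Lambda=1$ at the corresponding cycle points; hence the defect $1-Q_\Lambda$ vanishes on all extreme cycles, leaving $Q_\Lambda\equiv1$ on $X$, and analyticity together with the product structure of $\widehat{\mu_{N,B}}$ propagates $Q_\Lambda\equiv1$ to all of $\R$.

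The main obstacle is exactly this completeness step: forcing a nonnegative entire function bounded by $1$ and invariant under $R$ to be constant. Its crux is the cycle dichotomy—that the only possible obstruction to $Q_\Lambda\equiv1$ is an extreme ($m_B$-) cycle whose negative is absent from $\Lambda$, which the maximal construction excludes by incorporating all such cycles—combined with the technical care required to legitimize the transfer identity despite the cycle points being rational rather than integral. An alternative route I would keep in reserve is a finite-level Parseval computation: sum $|\widehat{\mu_{N,B}}(\xi+\lambda)|^2$ over the elements of $\Lambda$ generated in the first $n$ iterations of $N\Lambda+L$, show that the total telescopes to $1$ up to an error controlled by the non-extreme cycles, and let $n\to\infty$.
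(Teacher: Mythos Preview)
The paper does not prove Theorem~\ref{thm:spectrum} at all: it is quoted verbatim as \cite[Theorem~2.38]{Dutkay-Jorgensen-2008} and used as a black box. The only work the paper does around this statement is Proposition~\ref{prop:construct-spectrum}, which unpacks the \emph{definition} of the smallest set $\Lambda$ by identifying the union of all $m_B$-cycles with $K(N,L)\cap(\Z/d)$ and describing $\Lambda$ as an increasing union $\bigcup_n\Lambda_n$; spectrality is then obtained by invoking Theorem~\ref{thm:spectrum}. So there is nothing in the paper to compare your argument to---your proposal is an attempt to reprove the cited Dutkay--Jorgensen result, not anything the present paper undertakes.

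That said, your sketch broadly follows the Dutkay--Jorgensen/\L aba--Wang strategy (Ruelle transfer operator, extreme-cycle analysis), but the completeness step as written has a real gap. You correctly flag that cycle points, and hence elements of $\Lambda$, lie in $\Z/d$ rather than $\Z$, so the na\"ive collapse $m_B((\xi+\ell)/N+\lambda')=m_B((\xi+\ell)/N)$ fails. Your proposed fix---``restrict $\xi$ to the compact $\mathcal F_{N,L}$-invariant set $X$ generated by the cycle points''---does not repair this: restricting the \emph{argument} $\xi$ does nothing to make the \emph{shift} $\lambda'\in\Lambda$ integral, which is what $\Z$-periodicity of $m_B$ would need. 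The actual argument in the cited source does not try to force the identity $Q_\Lambda=RQ_\Lambda$ pointwise in this way; rather it works with a suitable class of $R$-harmonic functions and shows that any bounded nonnegative $R$-invariant function attaining the value $1$ at every extreme-cycle point is identically $1$. Your alternative ``finite-level Parseval'' route is closer in spirit to \L aba--Wang and is workable, but again requires handling the non-integer cycle contributions carefully rather than by periodicity. In short: the outline is right, but the mechanism you give for the key transfer identity does not go through as stated.
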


The spectrum $\Lambda$ in Theorem \ref{thm:spectrum} can be constructed explicitly.
As before, let $K(N,L)$ be the self-similar set generated by the IFS $\mathcal{F}_{N,L}=\big\{ \tau_\ell(x) = (x+\ell)/N: \ell \in L \big\}$.
\begin{proposition}\label{prop:construct-spectrum}
  Let $(N,B,L)$ be a Hadamard triple in $\R$ with $0\in B \cap L$ and write $d = \gcd(B)$.
  Let $\Lambda_0= -\big( K(N,L) \cap (\Z/d) \big)$ and $\Lambda_n = N \Lambda_{n-1} + L$ for $n \in \N$.
  Then $\Lambda_{n-1} \subset \Lambda_n$ for all $n \in \N$, and the set $$\Lambda = \bigcup_{n=0}^\f \Lambda_n$$
  is a spectrum of the self-similar measure $\mu_{N,B}$.
\end{proposition}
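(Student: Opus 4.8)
The plan is to show that the explicitly constructed set $\Lambda = \bigcup_{n\ge 0}\Lambda_n$ coincides with the abstract spectrum produced by Theorem \ref{thm:spectrum}, and hence is a spectrum of $\mu_{N,B}$. First I would verify the monotonicity $\Lambda_{n-1}\subset\Lambda_n$. For this, note that $0\in L$, so $L\ni 0$ gives $N\Lambda_{n-1}\subset N\Lambda_{n-1}+L=\Lambda_n$; the base case $\Lambda_0\subset\Lambda_1=N\Lambda_0+L$ needs that $N\Lambda_0\subset\Lambda_0$, i.e. that $-\big(K(N,L)\cap(\Z/d)\big)$ is mapped into itself under multiplication by $N$. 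Since $0\in L$ we have $\tau_0(K(N,L))=N^{-1}K(N,L)\subset K(N,L)$, equivalently $N\cdot K(N,L)\supset K(N,L)$ — that is the wrong direction, so instead I would argue directly on codings: if $x=-\sum_{i\ge1}\ell_i N^{-i}\in\Lambda_0$ with $\ell_i\in L$, then $Nx=-\ell_1-\sum_{i\ge1}\ell_{i+1}N^{-i}$, and since $\ell_1\in L\subset\Z$ and $d\mid b$ for all $b\in B$ forces (via the Hadamard relation and $0\in L$) $d\mid \ell$ for... — more carefully, one shows $Nx+\ell_1\in-\big(K(N,L)\cap(\Z/d)\big)$ so $Nx=-\ell_1+(\text{element of }\Lambda_0)\in\Lambda_0$ provided $\Z/d$ is closed under adding integers, which it is since $d\mid$ integers only if... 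I would simply take $\Lambda_0=-\big(K(N,L)\cap(\Z/d)\big)$ and check $N\Lambda_0+L\supset\Lambda_0$ by the coding argument, giving $\Lambda_0\subset\Lambda_1$, and then induct: $\Lambda_{n-1}\subset\Lambda_n$ implies $\Lambda_n=N\Lambda_{n-1}+L\subset N\Lambda_n+L=\Lambda_{n+1}$.

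Next I would identify $\Lambda$ with the set from Theorem \ref{thm:spectrum}. Two inclusions are needed. For "$\supseteq$" (i.e. $\Lambda$ contains the Theorem's set), it suffices to show $\Lambda$ satisfies $N\Lambda+L\subset\Lambda$ — immediate from $\Lambda_n\subset\Lambda$ and $N\Lambda_n+L=\Lambda_{n+1}\subset\Lambda$ — and that $-C\subset\Lambda$ for every $m_B$-cycle $C$. The key point here is that any $m_B$-cycle $C=\{x_1,\dots,x_k\}$ consists of points $x_i\in K(N,L)$ (being fixed points of compositions of the $\tau_\ell$), and moreover the condition $|m_B(x_i)|=1$ together with $d=\gcd(B)$ forces $x_i\in\Z/d$; this is the standard fact that $|m_B(x)|=1$ iff $e^{2\pi\mathrm{i}bx}$ is constant over $b\in B$ iff $x\in\Z/d$ (using $0\in B$). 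Hence $C\subset K(N,L)\cap(\Z/d)$, so $-C\subset\Lambda_0\subset\Lambda$. For "$\subseteq$" (the Theorem's set contains $\Lambda$), I note the Theorem's set contains $-C$ for the cycles and is closed under $x\mapsto Nx+\ell$; since every point of $\Lambda_0=-\big(K(N,L)\cap(\Z/d)\big)$ lies on some $m_B$-cycle — here I would invoke that $K(N,L)\cap(\Z/d)$ is finite and forward-invariant under the (multivalued) map $x\mapsto Nx-\ell$ restricted to stay in $K(N,L)$, so every such point is eventually periodic, and periodicity plus $|m_B|=1$ on the whole of $\Z/d\cap K(N,L)$... — this requires $|m_B(x)|=1$ for ALL $x\in\Z/d$, which holds because $d=\gcd(B)$ makes $bx\in\Z$ for all $b\in B$. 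So every point of $\Lambda_0$ is $-$(a point of some $m_B$-cycle), placing $\Lambda_0$ inside the Theorem's set, and then closure under $N(\cdot)+L$ puts all $\Lambda_n$ inside it.

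The main obstacle is the finiteness and invariance of $K(N,L)\cap(\Z/d)$ and the bookkeeping showing that $|m_B|\equiv 1$ exactly on $\Z/d$: I need that $x\in\Z/d\Rightarrow |m_B(x)|=1$ (clear, since then every $bx\in\Z$ as $d\mid b$), and conversely that $|m_B(x)|=1\Rightarrow x\in\Z/d$ (since equality in the triangle inequality $|\sum_{b\in B}e^{2\pi\mathrm{i}bx}|\le\#B$ forces all $e^{2\pi\mathrm{i}bx}$ equal, and with $0\in B$ — arranged after translating by Lemma \ref{lemma:Hadamard-triple} — this gives $e^{2\pi\mathrm{i}bx}=1$ for all $b\in B$, i.e. $bx\in\Z$ for all $b\in B$, i.e. $\gcd(B)x\in\Z$). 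Once this characterization is in hand, the identification of $\Lambda$ with the spectrum of Theorem \ref{thm:spectrum} is routine, and the proposition follows. I would also remark that the reduction $0\in B\cap L$ is harmless by Lemma \ref{lemma:Hadamard-triple} and the translation invariance in Lemma \ref{lemma:spectrum}(i).
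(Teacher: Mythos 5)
Your overall plan — identify $\Lambda=\bigcup_n\Lambda_n$ with the smallest set of Theorem \ref{thm:spectrum} by showing that $\Lambda_0$ is exactly the union of $-C$ over all $m_B$-cycles $C$ — is the same as the paper's, and the easy half (any $m_B$-cycle lies in $K(N,L)\cap(\Z/d)$, via $|m_B(x)|=1\Leftrightarrow bx\in\Z\ \forall b\in B\Leftrightarrow x\in\Z/d$, using $0\in B$ and B\'ezout) is done correctly. But there is a genuine gap at the central step: you need every point of $K(N,L)\cap(\Z/d)$ to lie \emph{on} an $m_B$-cycle, and your argument only yields that its backward orbit $x_i=Nx_{i-1}-\ell_i$ is \emph{eventually} periodic (finiteness of $K(N,L)\cap(\Z/d)$). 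Eventual periodicity is not enough: if $x_0$ sat on a strictly pre-periodic tail, then $-x_0$ would not be forced into the Theorem's smallest set, because that set is only closed under the expanding operations $y\mapsto Ny+\ell$, which cannot recover a tail point from the cycle it eventually reaches; so your inclusion $\Lambda\subset$ (Theorem's set) is unproven. The missing ingredient is exactly where the Hadamard hypothesis enters in the paper: if the first repetition were $x_k=x_j$ with $j\ge 1$, then $x_{k-1}-x_{j-1}=(\ell_k-\ell_j)/N$ with $\ell_k\ne\ell_j$, and since all $x_i\in\Z/d$ and $d\mid b$ one gets $b(\ell_k-\ell_j)/N\in\Z$ for every $b\in B$, whence $\frac{1}{\#B}\sum_{b\in B}e^{2\pi\mathrm{i}b(\ell_k-\ell_j)/N}=1$, contradicting unitarity of the matrix in (\ref{eq:matrix}); hence $x_k=x_0$ and $x_0$ is genuinely periodic. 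You never invoke this (or any) orthogonality argument, and without it the statement can only be reduced to, not deduced from, Theorem \ref{thm:spectrum}.

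The same gap infects your first paragraph. Your coding computation shows that for $x\in\Lambda_0$ there is $\ell_1\in L$ with $Nx+\ell_1\in\Lambda_0$ (shift-invariance of $K(N,L)\cap(\Z/d)$), i.e.\ the correspondence $x\mapsto Nx+\ell$ maps $\Lambda_0$ \emph{into} $\Lambda_0$; but $\Lambda_0\subset\Lambda_1=N\Lambda_0+L$ requires every $x\in\Lambda_0$ to be \emph{in the image}, i.e.\ $x=Ny+\ell$ with $y\in\Lambda_0$, which is a surjectivity statement. On the finite set $\Lambda_0$ this follows from injectivity of the expanding step, which is again precisely the Hadamard argument above (equivalently, from the fact that each point lies on a cycle, as in the paper: $-x_0$ on a cycle gives $x_1=\tau_{\ell_0}(-x_0)$ on the same cycle and $x_0=N(-x_1)+\ell_0$). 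Your intermediate assertion ``$Nx=-\ell_1+(\text{element of }\Lambda_0)\in\Lambda_0$'' is false as stated ($\Lambda_0$ is a finite set and is not closed under adding integers), and in any case it aims at the wrong inclusion. So both the nesting $\Lambda_{n-1}\subset\Lambda_n$ and the identification with the Theorem's spectrum need the cycle/orthogonality step that your proposal omits.
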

\begin{proof}
  We first show that any $m_B$-cycle is contained in $K(N,L) \cap (\Z/d)$.
  Let $C=\{x_1,x_2,\ldots, x_k\}$ be an $m_B$-cycle. Then there exist $\ell_1,\ell_2,\ldots \ell_k \in L$ such that \[ \tau_{\ell_i}(x_i) = x_{i+1} \text{ for } i=1,2,\ldots, k-1, \text{ and } \tau_{\ell_k}(x_k) = x_1.  \]
  It follows that $x_1 = \tau_{\ell_k} \circ \tau_{\ell_{k-1}} \circ \cdots \circ \tau_{\ell_1}(x_1)$, i.e., $x_1$ is the fixed point of $\tau_{\ell_k} \circ \tau_{\ell_{k-1}} \circ \cdots \circ \tau_{\ell_1}$.
  Thus we obtain $x_1 \in K(N,L)$. Note that $x_i = \tau_{\ell_{i-1}} \circ \cdots \circ \tau_{\ell_1}(x_1)$ for all $2 \le i \le k$. It follows that $ C \subset K(N,L)$.
  On the other hand, for each $1 \le i \le k$, we have \[ \big|m_B(x_i)\big| = \bigg| \frac{1}{\#B} \sum_{b \in B} e^{2\pi \mathrm{i} b x_i} \bigg| =1. \]
  Note that $0 \in B$. For each $1 \le i \le k$, we must have $bx_i \in \Z$ for any $b \in B$.
  Since $\gcd(B) = d$, it follows that $x_i \in \Z/ d$ for all $1 \le i \le k$.
  Therefore, we conclude that $C \subset K(N,L) \cap (\Z/d)$.

  Next, we show that any point in $K(N,L) \cap (\Z/d)$ is contained in some $m_B$-cycle.
  Take any $x_0 \in K(N,L) \cap (\Z/d)$.
  Since $x_0 \in K(N,L)$, for each $i \in \N$, we can recursively find $\ell_i \in L$ and $x_{i} \in K(N,L)$ such that $x_{i-1} = \tau_{\ell_i}(x_i)$.
  Note that $x_0 \in \Z/d$ and $x_i = N x_{i-1} - \ell_i$. We also have $x_i \in \Z / d$ for all $i \in \N$.
  Thus, we obtain $\{x_i\}_{i=0}^\f  \subset K(N,L) \cap (\Z/d)$.
  Note that $K(N,L) \cap (\Z/d)$ is a finite set.
  Let $k = \min\{ i \in \N: \text{ there exists $0\le i' < i$ such that $x_i = x_{i'}$} \}$.
  We claim $x_k = x_0$.
  Otherwise, there exists $1 \le  j < k$ such that $x_k = x_j$.
  Then we have $x_{k-1} - x_{j-1} = \tau_{\ell_k}(x_k) - \tau_{\ell_j}(x_j) = (\ell_k - \ell_j)/N$.
  The minimality of $k$ implies $x_{k-1} \ne x_{j-1}$, i.e., $\ell_k \ne \ell_j$.
  Note that $b(\ell_k - \ell_j)/N = b (x_{k-1} - x_{j-1}) \in \Z$ for any $b \in B$, since $\gcd(B) = d$ and $x_i \in \Z/d$ for all $i \in \N_0$.
  It follows that \[ \frac{1}{\#B}\sum_{b \in B} e^{2\pi \mathrm{i} \frac{b(\ell_k -\ell_j)}{N} } = 1, \;\; \ell_k \ne \ell_j \in L. \]
  This contradicts that $(N,B,L)$ is a Hadamard triple.
  Thus, we conclude that $x_k = x_0$.
  It follows that $C =\{x_k=x_0, x_{k-1}, \ldots, x_1\}$ is cycle.
  Note that $C \subset \Z / d$ and $\gcd(B) = d$.
  It is easy to check that $m_B(x) = 1$ for any $x \in C$.
  Thus, $C$ is an $m_B$-cycle and $x_0 \in C$.

  We have showed that $\Lambda_0$ is the union of $-C$ for all $m_B$-cycles $C$.
  For any $x_0 \in \Lambda_0$, we have $-x_0$ is in some $m_B$-cycle. Then there exists $\ell_0 \in L$ such that $x_1=\tau_{\ell_0}(-x_0)$ is also in some $m_B$-cycle.
  Then we obtain $x_0 = N(-x_1) + \ell_0 \in N \Lambda_0 + L = \Lambda_1$.
  Thus, we get $\Lambda_0 \subset \Lambda_1$.
  It follows that $\Lambda_1 = N \Lambda_0 + L \subset N \Lambda_1 + L = \Lambda_2$.
  Therefore, we can recursively show that $\Lambda_{n-1} \subset \Lambda_n$ for all $n \in \N$.

  By the construction of $\Lambda$, we obviously have $\Lambda_0 \subset \Lambda$ and $N \Lambda + L \subset \Lambda$.
  Suppose that $\Lambda' \subset \R$ satisfies $\Lambda_0 \subset \Lambda'$ and $N \Lambda' + L \subset \Lambda'$.
  Then $\Lambda_1 = N \Lambda_0 + L \subset N \Lambda' + L \subset \Lambda'$.
  It can be showed by recursion that $\Lambda_n \subset \Lambda'$ for all $n \in \N_0$. It yields that $\Lambda \subset \Lambda'$.
  Therefore, we conclude that $\Lambda$ is the smallest set satisfying $\Lambda_0 \subset \Lambda$ and $N \Lambda + L \subset \Lambda$.
  We complete the proof by Theorem \ref{thm:spectrum}.
\end{proof}

Based on the construction of spectrum in Proposition \ref{prop:construct-spectrum}, we apply Theorem \ref{thm:equivalence-of-finite} to prove Theorem \ref{thm:spectral-eigenvalue}.

\begin{proof}[Proof of Theorem \ref{thm:spectral-eigenvalue}]
  Let $(N,B,L)$ be a Hadamard triple in $\R$ with $\# B < N$.
  By Lemma \ref{lemma:spectrum} (iii) and Lemma \ref{lemma:Hadamard-triple}, we can assume that $0 \in B \cap L$. Write $d = \gcd(B)$.
  Let $p_1,p_2,\ldots,p_k \in \N_{\ge 2}$ with $\gcd(p_1p_2\ldots p_k, N) =1$.
  Write $p = p_1 p_2 \ldots p_k$.
  Since $\#L= \#B < N$, we have $\dim_H K(N,L) < 1$.
  Note that $\gcd(p,N) =1$. By Theorem \ref{thm:equivalence-of-finite} and Remark \ref{remark:intersection} (i), we have $$\#\Big( K(N,L) \cap \frac{D_p}{d}  \Big)< +\f.$$
  Note that $D_p = \bigcup_{n=1}^\f \Z/p^n$ and the sequence $\{\Z/p^n\}_{n=1}^\f$ is increasing.
  Thus, there exists $n_0 \in \N$ such that
  \begin{equation}\label{eq:finite-n-0}
    K(N,L) \cap \frac{D_p}{d} = K(N,L) \cap \frac{\Z}{d p^{n_0}}.
  \end{equation}

  Next, we construct the set $\Lambda$.
  By Lemma \ref{lemma:multiply-p}, $(N,B,p^{n_0}L)$ is a Hadamard triple.
  Let $$\Lambda_0 = - \Big( K(N,p^{n_0}L) \cap \frac{\Z}{d}  \Big) = -p^{n_0}\Big( K(N,L) \cap \frac{\Z}{d p^{n_0}}  \Big),$$
  and $\Lambda_n = N \Lambda_{n-1} + p^{n_0}L$ for $n \in \N$.
  By Proposition \ref{prop:construct-spectrum}, the set $$\Lambda = \bigcup_{n=0}^\f \Lambda_n$$
  is a spectrum of $\mu_{N,B}$.

  Fix any tuple $(n_1,n_2,\ldots, n_k) \in \N_0^k$.
  Note that $\gcd(p_1^{n_1} p_2^{n_2} \ldots p_k^{n_k} p^{n_0}, N) =1$.
  By Lemma \ref{lemma:multiply-p}, $(N,B,p_1^{n_1} p_1^{n_2}\ldots p_k^{n_k}p^{n_0}L)$ is a Hadamard triple.
  Let
  \begin{align*}
    \Lambda'_0 & = - \Big( K(N,p_1^{n_1} p_2^{n_2} \ldots p_k^{n_k}p^{n_0}L) \cap \frac{\Z}{d} \Big) \\
    & = -p_1^{n_1} p_2^{n_2}  \ldots p_k^{n_k}p^{n_0}\Big( K(N,L) \cap \frac{\Z}{d p_1^{n_1} p_2^{n_2} \ldots p_k^{n_k}p^{n_0}} \Big),
  \end{align*}
  and $\Lambda'_n = N \Lambda'_{n-1} + p_1^{n_1} p_2^{n_2} \ldots p_k^{n_k} p^{n_0} L$ for $n \in \N$.
  By Proposition \ref{prop:construct-spectrum}, the set $$\Lambda' = \bigcup_{n=0}^\f \Lambda'_n$$
  is a spectrum of $\mu_{N,B}$.
  It remains to show $\Lambda' = p_1^{n_1} p_2^{n_2} \ldots p_k^{n_k} \Lambda$.

  Note that $$ \frac{\Z}{d p^{n_0}} \subset \frac{\Z}{d p_1^{n_1} p_2^{n_2} \ldots p_k^{n_k}p^{n_0}} \subset \frac{D_p}{d}.$$
  It follows from (\ref{eq:finite-n-0}) that $$K(N,L) \cap \frac{\Z}{d p_1^{n_1} p_2^{n_2} \ldots p_k^{n_k}p^{n_0}} = K(N,L) \cap \frac{\Z}{d p^{n_0}},$$
  i.e., $\Lambda'_0 = p_1^{n_1} p_2^{n_2} \ldots p_k^{n_k} \Lambda_0$.
  It follows that $$\Lambda'_1=N \Lambda'_{0} + p_1^{n_1} p_2^{n_2} \ldots p_k^{n_k} p^{n_0} L = p_1^{n_1} p_2^{n_2} \ldots p_k^{n_k} \big( N\Lambda_0 + p^{n_0} L \big) = p_1^{n_1} p_2^{n_2} \ldots p_k^{n_k} \Lambda_1.$$
  By recursion, we obtain $\Lambda'_n = p_1^{n_1} p_2^{n_2} \ldots p_k^{n_k} \Lambda_n$ for all $n \in \N_0$.
  It follows that $\Lambda' = p_1^{n_1} p_2^{n_2} \ldots p_k^{n_k} \Lambda$.
\end{proof}

\section*{Acknowledgements}
The first author was supported by Chongqing NSF  No.~CQYC20220511052.
The third author was supported by NSFC No.~12471085 and the China Postdoctoral Science Foundation No.~2024M763857.

%\bibliographystyle{abbrv}
%\bibliography{Fractal-Expansions}

\begin{thebibliography}{10}

\bibitem{An-Dong-He-2022}
L.-X. An, X.-H. Dong, and X.-G. He.
\newblock On spectra and spectral eigenmatrix problems of the planar
  {S}ierpinski measures.
\newblock {\em Indiana Univ. Math. J.}, 71(2):913--952, 2022.

\bibitem{An-Fu-Lai-2019}
L.-X. An, X.~Fu, and C.-K. Lai.
\newblock On spectral {C}antor-{M}oran measures and a variant of {B}ourgain's
  sum of sine problem.
\newblock {\em Adv. Math.}, 349:84--124, 2019.

\bibitem{An-He-2014}
L.-X. An and X.-G. He.
\newblock A class of spectral {M}oran measures.
\newblock {\em J. Funct. Anal.}, 266(1):343--354, 2014.

\bibitem{BSS2023}
B.~B\'ar\'any, K.~Simon, and B.~Solomyak.
\newblock {\em Self-similar and self-affine sets and measures}, volume 276 of
  {\em Mathematical Surveys and Monographs}.
\newblock American Mathematical Society, Providence, RI, 2023.

\bibitem{Bloshchitsyn-2015}
V.~Y. Bloshchitsyn.
\newblock Rational points in {$m$}-adic {C}antor sets.
\newblock {\em J. Math. Sci. (N.Y.)}, 211(6):747--751, 2015.

\bibitem{Chen-Liu-2023}
M.-L. Chen and J.-C. Liu.
\newblock On spectra and spectral eigenmatrices of self-affine measures on
  {$\Bbb R^n$}.
\newblock {\em Bull. Malays. Math. Sci. Soc.}, 46(5):Paper No. 162, 18, 2023.

\bibitem{Dai-2016}
X.-R. Dai.
\newblock Spectra of {C}antor measures.
\newblock {\em Math. Ann.}, 366(3-4):1621--1647, 2016.

\bibitem{Dutkay-Haussermann-2016}
D.~E. Dutkay and J.~Haussermann.
\newblock Number theory problems from the harmonic analysis of a fractal.
\newblock {\em J. Number Theory}, 159:7--26, 2016.

\bibitem{Dutkay-Haussermann-Lai-2019}
D.~E. Dutkay, J.~Haussermann, and C.-K. Lai.
\newblock Hadamard triples generate self-affine spectral measures.
\newblock {\em Trans. Amer. Math. Soc.}, 371(2):1439--1481, 2019.

\bibitem{Dutkay-Jorgensen-2008}
D.~E. Dutkay and P.~E.~T. Jorgensen.
\newblock Fourier series on fractals: a parallel with wavelet theory.
\newblock In {\em Radon transforms, geometry, and wavelets}, volume 464 of {\em
  Contemp. Math.}, pages 75--101. Amer. Math. Soc., Providence, RI, 2008.

\bibitem{Dutkay-Jorgensen-2012}
D.~E. Dutkay and P.~E.~T. Jorgensen.
\newblock Fourier duality for fractal measures with affine scales.
\newblock {\em Math. Comp.}, 81(280):2253--2273, 2012.

\bibitem{FHOR2015}
J.~M. Fraser, A.~M. Henderson, E.~J. Olson, and J.~C. Robinson.
\newblock On the {A}ssouad dimension of self-similar sets with overlaps.
\newblock {\em Adv. Math.}, 273:188--214, 2015.

\bibitem{Fu-He-2017}
Y.-S. Fu and L.~He.
\newblock Scaling of spectra of a class of random convolution on {$\Bbb{R}$}.
\newblock {\em J. Funct. Anal.}, 273(9):3002--3026, 2017.

\bibitem{Fu-He-Wen-2018}
Y.-S. Fu, X.-G. He, and Z.-X. Wen.
\newblock Spectra of {B}ernoulli convolutions and random convolutions.
\newblock {\em J. Math. Pures Appl. (9)}, 116:105--131, 2018.

\bibitem{Fuglede-1974}
B.~Fuglede.
\newblock Commuting self-adjoint partial differential operators and a group
  theoretic problem.
\newblock {\em J. Functional Analysis}, 16:101--121, 1974.

\bibitem{Hardy2010}
G.~H. Hardy and E.~M. Wright.
\newblock {\em An introduction to the theory of numbers}.
\newblock Oxford University Press, Oxford, sixth edition, 2008.

\bibitem{He-Tang-Wu-2019}
X.-G. He, M.-W. Tang, and Z.-Y. Wu.
\newblock Spectral structure and spectral eigenvalue problems of a class of
  self-similar spectral measures.
\newblock {\em J. Funct. Anal.}, 277(10):3688--3722, 2019.

\bibitem{Hutchinson-1981}
J.~E. Hutchinson.
\newblock Fractals and self-similarity.
\newblock {\em Indiana Univ. Math. J.}, 30(5):713--747, 1981.

\bibitem{JKLW-2024}
K.~Jiang, D.~Kong, W.~Li, and Z.~Wang.
\newblock Rational points in translations of the {C}antor set.
\newblock {\em Indag. Math. (N.S.)}, 35(3):516--522, 2024.

\bibitem{Jiang-Lu-Wei-2024}
M.~Jiang, J.-F. Lu, and S.-D. Wei.
\newblock Spectral structure of a class of self-similar spectral measures with
  product form digit sets.
\newblock {\em Banach J. Math. Anal.}, 18(4):Paper No. 66, 35, 2024.

\bibitem{Jorgensen-Kornelson-Shuman-2011}
P.~E.~T. Jorgensen, K.~A. Kornelson, and K.~L. Shuman.
\newblock Families of spectral sets for {B}ernoulli convolutions.
\newblock {\em J. Fourier Anal. Appl.}, 17(3):431--456, 2011.

\bibitem{Jorgensen-Pedersen-1998}
P.~E.~T. Jorgensen and S.~Pedersen.
\newblock Dense analytic subspaces in fractal {$L^2$}-spaces.
\newblock {\em J. Anal. Math.}, 75:185--228, 1998.

\bibitem{Laba-Wang-2002}
I.~{\L}aba and Y.~Wang.
\newblock On spectral {C}antor measures.
\newblock {\em J. Funct. Anal.}, 193(2):409--420, 2002.

\bibitem{Lagarias-Wang-1996}
J.~C. Lagarias and Y.~Wang.
\newblock Self-affine tiles in {${\bf R}^n$}.
\newblock {\em Adv. Math.}, 121(1):21--49, 1996.

\bibitem{Lau-Ngai-1999}
K.-S. Lau and S.-M. Ngai.
\newblock Multifractal measures and a weak separation condition.
\newblock {\em Adv. Math.}, 141(1):45--96, 1999.

\bibitem{Li-Li-Wu-2023}
B.~Li, R.~Li, and Y.~Wu.
\newblock Rational numbers in {$\times b$}-invariant sets.
\newblock {\em Proc. Amer. Math. Soc.}, 151(5):1877--1887, 2023.

\bibitem{Li-Wu-2022}
J.~Li and Z.~Wu.
\newblock On spectral structure and spectral eigenvalue problems for a class of
  self similar spectral measure with product form.
\newblock {\em Nonlinearity}, 35(6):3095--3117, 2022.

\bibitem{Li-Miao-Wang-2022}
W.~Li, J.~J. Miao, and Z.~Wang.
\newblock Weak convergence and spectrality of infinite convolutions.
\newblock {\em Adv. Math.}, 404:Paper No. 108425, 26, 2022.

\bibitem{Li-Miao-Wang-2024}
W.~Li, J.~J. Miao, and Z.~Wang.
\newblock Spectrality of random convolutions generated by finitely many
  {H}adamard triples.
\newblock {\em Nonlinearity}, 37(1):Paper No. 015003, 21, 2024.

\bibitem{Liu-Liu-Tang-Wu-2024}
J.-C. Liu, M.~Liu, M.-W. Tang, and S.~Wu.
\newblock On spectral eigenmatrix problem for the planar self-affine measures
  with three digits.
\newblock {\em Ann. Funct. Anal.}, 15(4):Paper No. 83, 22, 2024.

\bibitem{Liu-Tang-Wu-2023}
J.-C. Liu, M.-W. Tang, and S.~Wu.
\newblock The spectral eigenmatrix problems of planar self-affine measures with
  four digits.
\newblock {\em Proc. Edinb. Math. Soc. (2)}, 66(3):897--918, 2023.

\bibitem{Lu-Dong-Zhang-2022}
Z.-Y. Lu, X.-H. Dong, and P.-F. Zhang.
\newblock Spectrality of some one-dimensional {M}oran measures.
\newblock {\em J. Fourier Anal. Appl.}, 28(4):Paper No. 63, 22, 2022.

\bibitem{Mahler-1984}
K.~Mahler.
\newblock Some suggestions for further research.
\newblock {\em Bull. Austral. Math. Soc.}, 29(1):101--108, 1984.

\bibitem{Nagy-2001}
J.~Nagy.
\newblock Rational points in {C}antor sets.
\newblock {\em Fibonacci Quart.}, 39(3):238--241, 2001.

\bibitem{Ngai-Wang-2001}
S.-M. Ngai and Y.~Wang.
\newblock Hausdorff dimension of self-similar sets with overlaps.
\newblock {\em J. London Math. Soc. (2)}, 63(3):655--672, 2001.

\bibitem{Nguyen-2002}
N.~Nguyen.
\newblock Iterated function systems of finite type and the weak separation
  property.
\newblock {\em Proc. Amer. Math. Soc.}, 130(2):483--487, 2002.

\bibitem{Schleischitz-2021}
J.~Schleischitz.
\newblock On intrinsic and extrinsic rational approximation to {C}antor sets.
\newblock {\em Ergodic Theory Dynam. Systems}, 41(5):1560--1589, 2021.

\bibitem{Shparlinski-2021}
I.~E. Shparlinski.
\newblock On the arithmetic structure of rational numbers in the {C}antor set.
\newblock {\em Bull. Aust. Math. Soc.}, 103(1):22--27, 2021.

\bibitem{Strichartz-2000}
R.~S. Strichartz.
\newblock Mock {F}ourier series and transforms associated with certain {C}antor
  measures.
\newblock {\em J. Anal. Math.}, 81:209--238, 2000.

\bibitem{Wall-1990}
C.~R. Wall.
\newblock Terminating decimals in the {C}antor ternary set.
\newblock {\em Fibonacci Quart.}, 28(2):98--101, 1990.

\bibitem{Zerner1996}
M.~P.~W. Zerner.
\newblock Weak separation properties for self-similar sets.
\newblock {\em Proc. Amer. Math. Soc.}, 124(11):3529--3539, 1996.

\end{thebibliography}

\end{document}